\newtheorem{theorem}{Theorem}[section]
\theoremstyle{plain}
\newtheorem{corollary}[theorem]{Corollary}
\newtheorem{example}[theorem]{Example}
\newtheorem{lemma}[theorem]{Lemma}
\newtheorem{proposition}[theorem]{Proposition}
\newtheorem{remark}[theorem]{Remark}
\numberwithin{equation}{section}
\def\ext#1#2#3{\mathcal{Q}(#1,#2,#3)}
\def\coc#1#2{\mathrm{C}(#1,#2)}
\def\zmap#1#2{\mathrm{Map}_0(#1,#2)}
\def\hom#1#2{\mathrm{Hom}(#1,#2)}
\def\cob#1#2{\mathrm{B}(#1,#2)}
\def\coh#1#2{\mathrm{H}(#1,#2)}
\def\inv#1{\mathrm{Inv}(#1)}
\def\invs#1{\mathrm{Inv}^*(#1)}
\def\invsc#1{\mathrm{Inv}^*_c(#1)}
\def\act#1#2#3{{}^{(#1,#2)}#3}
\def\conj#1#2{{}^{#1}#2}
\def\ker#1{\mathrm{Ker}\,#1}
\def\im#1{\mathrm{Im}\,#1}
\def\aut#1{\mathrm{Aut}(#1)}
\def\gf#1{\mathrm{GF}(#1)}
\def\nilp#1{\mathcal{N}(#1)}
\title{Enumeration of nilpotent loops via cohomology}
\author[Daly]{Daniel Daly}
\author[Vojt\v{e}chovsk\'y]{Petr Vojt\v{e}chovsk\'y}
\email{ddaly@math.du.edu, petr@math.du.edu}
\address{Department of Mathematics, University of Denver, 2360 S Gaylord St,
Denver, CO 80208, U.S.A.}
\begin{document}

\begin{abstract}
The isomorphism problem for centrally nilpotent loops can be tackled by methods
of cohomology. We develop tools based on cohomology and linear algebra that
either lend themselves to direct count of the isomorphism classes (notably in
the case of nilpotent loops of order $2q$, $q$ a prime), or lead to efficient
classification computer programs. This allows us to enumerate all nilpotent
loops of order less than $24$.
\end{abstract}

\keywords{Nilpotent loop, classification of nilpotent loops, loop cohomology,
group cohomology, central extension, latin square}

\subjclass[2000]{Primary: 20N05. Secondary: 20J05, 05B15.}

\maketitle

\section{Introduction}\label{Sc:Introduction}

A nonempty set $Q$ equipped with a binary operation $\cdot$ is a \emph{loop} if
it possesses a neutral element $1$ satisfying $1\cdot x = x\cdot 1 = x$ for
every $x\in Q$, and if for every $x\in Q$ the mappings $Q\to Q$, $y\mapsto
x\cdot y$ and $Q\to Q$, $y\mapsto y\cdot x$ are bijections of $Q$. From now on
we will abbreviate $x\cdot y$ as $xy$.

Note that multiplication tables of finite loops are precisely normalized latin
squares, and that groups are precisely associative loops.

The \emph{center} $Z(Q)$ of a loop $Q$ consists of all elements $x\in Q$ such
that
\begin{displaymath}
    xy = yx,\quad (xy)z = x(yz),\quad (yx)z = y(xz),\quad (yz)x = y(zx)
\end{displaymath}
for every $y$, $z\in Q$. \emph{Normal} subloops are kernels of loop
homomorphisms. The center $Z(Q)$ is a normal subloop of $Q$. The \emph{upper
central series} $Z_0(Q)\le Z_1(Q)\le \cdots$ is defined by
\begin{displaymath}
    Z_0(Q)=1,\quad Q/Z_{i+1}(Q) = Z(Q/Z_i(Q)).
\end{displaymath}
If there is $n\ge 0$ such that $Z_{n-1}(Q)<Z_n(Q)=Q$, we say that $Q$ is
\emph{(centrally) nilpotent of class $n$}.

The goal of this paper is to initiate the classification of small nilpotent
loops up to isomorphism, where by small we mean either that the order $|Q|$ of
$Q$ is a small integer, or that the prime factorization of $|Q|$ involves few
primes.

Here is a summary of the paper, with $A=(A,+)$ a finite abelian group and
$F=(F,\cdot)$ a finite loop throughout.

\S\ref{Sc:Cocycles}. Central extensions of $A$ by $F$ are in one-to-one
correspondence with (normalized) cocycles $\theta:F\times F\to A$. Let
$\ext{F}{A}{\theta}$ be the central extension of $A$ by $F$ via $\theta$. If
$\theta-\mu$ is a coboundary then $\ext{F}{A}{\theta}\cong \ext{F}{A}{\mu}$,
that is, the two loops are isomorphic.

\S\ref{Sc:Action}. The group $\aut{F,A}=\aut{F}\times\aut{A}$ acts on the
cocycles by
\begin{displaymath}
    (\alpha,\beta):\theta\mapsto\act{\alpha}{\beta}{\theta},\quad
    \act{\alpha}{\beta}{\theta}:(x,y)\mapsto
\beta\theta(\alpha^{-1}x,\alpha^{-1}y).
\end{displaymath}
For every $(\alpha,\beta)\in\aut{F,A}$ we have
$\ext{F}{A}{\theta}\cong\ext{F}{A}{\act{\alpha}{\beta}{\theta}}$.

Fix a cocycle $\theta$, and let us write $\theta\sim\mu$ if there is
$(\alpha,\beta)\in\aut{F,A}$ such that $\act{\alpha}{\beta}{\theta}-\mu$ is a
coboundary. If $\theta\sim\mu$, we have
$\ext{F}{A}{\theta}\cong\ext{F}{A}{\mu}$. If the converse is true for every
$\mu$, we say that $\theta$ is \emph{separable}. We describe several situations
in which all cocycles are separable.

\S\ref{Sc:Main}. If all cocycles are separable, the isomorphism problem for
central extensions reduces to the study of the equivalence classes of $\sim$.

For $(\alpha,\beta)\in\aut{F,A}$, let
\begin{displaymath}
    \inv{\alpha,\beta}=\{\theta;\theta-\act{\alpha}{\beta}{\theta}\text{ is a
    coboundary}\},
\end{displaymath}
and for $H\subseteq\aut{F,A}$, let
\begin{displaymath}
    \inv{H} = \bigcap_{(\alpha,\beta)\in H}\inv{\alpha,\beta}.
\end{displaymath}
Then $\inv{H}$ is a subgroup of cocycles, and $\inv{H}=\inv{\langle H\rangle}$,
where $\langle H\rangle$ is the subgroup of $\aut{F,A}$ generated by $H$.

For $H\le\aut{F,A}$, let
\begin{displaymath}
    \invs{H}=\inv{H}\setminus\bigcup_{H<K\le\aut{F,A}}\inv{K}.
\end{displaymath}
When $\theta\in\invs{H}$, the $\sim$-equivalence class $[\theta]_\sim$ of
$\theta$ is a union of precisely $[\aut{F,A}:H]$ cosets of coboundaries. It is
not necessarily true that $[\theta]_\sim$ is contained in $\invs{H}$, however,
it is contained in
\begin{displaymath}
    \invsc{H} = \bigcup_K \invs{K},
\end{displaymath}
where the union is taken over all subgroups $K$ of $\aut{F,A}$ conjugate to
$H$. Moreover, $|\invsc{H}| = |\invs{H}|\cdot[\aut{F,A}:N_{\aut{F,A}}(H)]$,
where $N_G(H)$ is the normalizer of $H$ in $G$.

Hence, if every cocycle is separable, we can enumerate all central extensions
of $A$ by $F$ up to isomorphism as soon as we know $|\invs{H}|$ for every
$H\le\aut{F,A}$, cf. Theorem \ref{Th:Main}.

\S\ref{Sc:Comp}. For $H$, $K\le \aut{F,A}$, we have $\inv{H}\cap\inv{K} =
\inv{\langle H\cup K\rangle}$. Hence $|\invs{K}|$ can be deduced from the
cardinalities of the subgroups $\inv{H}$ via the principle of inclusion and
exclusion based on the subgroup lattice of $\aut{F,A}$.

In turn, to find $|\inv{H}|$, it suffices to determine the cardinalities of
$\inv{\alpha,\beta}$ for every $(\alpha,\beta)\in H$, and the way these
subgroups intersect. When $A$ is a prime field, the action
$\theta\mapsto\act{\alpha}{\beta}{\theta}$ can be seen as a matrix operator on
the vector space of cocycles, and its preimage of coboundaries is
$\inv{\alpha,\beta}$. It is therefore not difficult to find
$\inv{\alpha,\beta}$ by means of (computer) linear algebra even for rather
large prime fields $A$ and loops $F$.

\S\ref{Sc:pq}. When $A=\mathbb Z_p$, $F=\mathbb Z_q$ and $p\ne q$ are primes,
the dimension of $\inv{\alpha,\beta}$ can be found without the assistance of a
computer, cf. Theorem \ref{Th:pq}.

\S\ref{Sc:2q}. Since every cocycle is separable when $p=2$ and $q$ is odd,
Theorems \ref{Th:Main} and \ref{Th:pq} give a formula for the number of
nilpotent loops of order $2q$, up to isomorphism, cf. Theorem \ref{Th:2q}. The
asymptotic growth of the number of nilpotent loops of order $2q$ is determined
in Theorem \ref{Th:Asymptotic2q}.

\S\ref{Sc:Inseparable}. Every central subloop contains $A=\mathbb Z_p$ for some
prime $p$. Not every choice of $A$ and $F$ results in separable cocycles, but
we can work around this problem when $A$ and $F$ are small by excluding the
subset $W(F,A)=\{\theta;\;Z(\ext{F}{A}{\theta})>A\}$, because all remaining
cocycles will be separable. When $W(F,A)$ is small, the isomorphism problem for
$\{\ext{F}{A}\theta;\theta\in W(F,A)\}$ can be tackled by a direct isomorphism
check, using the GAP package LOOPS.

\S\ref{Sc:Enumeration}. This allows us to enumerate all nilpotent loops of
order $n$ less than $24$ up to isomorphism, cf. Table \ref{Tb:Enumeration}. The
computational difficulties are nontrivial, notably for $n=16$ and $n=20$. We
accompany Table \ref{Tb:Enumeration} by a short narrative describing the
difficulties and how they were overcome.

There are $2,623,755$ nilpotent loops $F$ of order $12$, which is why the case
$n=24$ is out of reach of the methods developed here.

\S\ref{Sc:Remarks}. In order not to distract from the exposition, we have
collected references to related work and ideas at the end of the paper.

\section{Central extensions, cocycles and coboundaries}\label{Sc:Cocycles}

We say that a loop $Q$ is a \emph{central extension of $A$ by $F$} if $A\le
Z(Q)$ and $Q/A\cong F$.

A mapping $\theta:F\times F\to A$ is a \emph{normalized cocycle} (or
\emph{cocycle}) if it satisfies
\begin{equation}\label{Eq:LoopCocycle}
    \theta(1,x) = \theta(x,1) = 0 \text{ for every $x\in F$.}
\end{equation}
For a cocycle $\theta:F\times F\to A$, define $\ext{F}{A}{\theta}$ on $F\times
A$ by
\begin{equation}\label{Eq:CentralExtension}
    (x,a)(y,b) = (xy,a+b+\theta(x,y)).
\end{equation}

The following characterization of central loop extensions is well known, and is
in complete analogy with the associative case:

\begin{theorem} The loop $Q$ is a central extension of $A$ by $F$ if and only if there is a
cocycle $\theta:F\times F\to A$ such that $Q\cong\ext{F}{A}{\theta}$.
\end{theorem}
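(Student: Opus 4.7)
The plan is to verify the two directions separately via the standard section-cocycle construction.

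For the ``if'' direction I start with a normalized cocycle $\theta$ and show directly that $\ext{F}{A}{\theta}$ is a central extension of $A$ by $F$. That it is a loop is immediate: $(1,0)$ is the neutral element thanks to the normalization $\theta(1,y)=\theta(x,1)=0$, and unique solvability of $(x,a)(y,b)=(z,c)$ in either variable follows from unique solvability in $F$ for the first coordinate and in $(A,+)$ for the second. Identifying $A$ with $\{(1,a):a\in A\}$, a short computation using $\theta(1,\cdot)=\theta(\cdot,1)=0$ shows that each $(1,a)$ commutes and associates with arbitrary elements, hence $A\le Z(\ext{F}{A}{\theta})$; projection onto the first coordinate finally yields $\ext{F}{A}{\theta}/A\cong F$.

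For the ``only if'' direction let $Q$ be a central extension of $A$ by $F$ and let $\pi:Q\to F$ be the projection modulo $A$. I pick a section $s:F\to Q$ of $\pi$ with $s(1)=1$. Since left multiplication in a loop is a bijection, each coset of $A$ has the form $s(x)A$ and every $q\in Q$ admits a unique factorization $q=s(\pi(q))a$ with $a\in A$. In particular $s(x)s(y)$ lies in $s(xy)A$, so there is a unique $\theta(x,y)\in A$ with $s(x)s(y)=s(xy)\theta(x,y)$; normalization is immediate from $s(1)=1$. Define $\phi:\ext{F}{A}{\theta}\to Q$ by $\phi(x,a)=s(x)a$; the unique factorization makes $\phi$ a bijection.

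The main obstacle is verifying that $\phi$ is a homomorphism, which amounts to the identity
\begin{displaymath}
    (s(x)a)(s(y)b) = s(xy)(\theta(x,y)+a+b).
\end{displaymath}
Because loops are in general neither associative nor commutative, this computation must proceed by repeatedly moving the central elements $a$, $b$, $\theta(x,y)$ past $s(x)$ and $s(y)$ using the four defining identities of the center stated in \S\ref{Sc:Introduction} (one commutativity and three associativity patterns). Once this bookkeeping is done, the rest of the argument is routine.
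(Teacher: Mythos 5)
Your proof is correct: the paper states this theorem without proof, calling it well known and ``in complete analogy with the associative case,'' and your section-and-cocycle argument --- defining $\theta(x,y)$ by $s(x)s(y)=s(xy)\theta(x,y)$ and verifying that $\phi(x,a)=s(x)a$ is a homomorphism by shuttling the central elements $a$, $b$, $\theta(x,y)$ past $s(x)$ and $s(y)$ via the four defining identities of the center --- is exactly the standard argument being invoked. No gaps; the only cosmetic point is that the unique factorization $q=s(\pi(q))a$ with $a\in A$ is most cleanly justified by noting $a=s(\pi(q))\backslash q$ lies in $\ker{\pi}=A$.
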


The cocycles $F\times F\to A$ form an abelian group $\coc{F}{A}$ with respect
to addition
\begin{displaymath}
    (\theta+\mu)(x,y) = \theta(x,y)+\mu(x,y).
\end{displaymath}
When $A$ is a field, $\coc{F}{A}$ is a vector space over $A$ with scalar
multiplication
\begin{displaymath}
    (c\theta)(x,y) = c\cdot \theta(x,y).
\end{displaymath}

Let
\begin{align*}
    \zmap{F}{A} &= \{\tau:F\to A;\;\tau(1)=0\},\\
    \hom{F}{A} &= \{\tau:F\to A;\;\tau\text{ is a homomorphism of loops}\},
\end{align*}
and observe:

\begin{lemma}
The mapping $\widehat{\phantom{x}}:\zmap{F}{A}\to \coc{F}{A}$, $\tau\mapsto
\widehat{\tau}$ defined by
\begin{displaymath}
    \widehat{\tau}(x,y) = \tau(xy)-\tau(x)-\tau(y)
\end{displaymath}
is a homomorphism of groups with kernel $\hom{F}{A}$.
\end{lemma}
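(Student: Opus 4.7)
The plan is to verify three things in order: that the image $\widehat{\tau}$ really lies in $\coc{F}{A}$ (i.e.\ is a normalized cocycle in the weak sense demanded by \eqref{Eq:LoopCocycle}), that $\tau\mapsto\widehat{\tau}$ respects addition, and that its kernel is exactly $\hom{F}{A}$. Since a ``cocycle'' here is defined purely by the normalization condition $\theta(1,x)=\theta(x,1)=0$ (no further cocycle identity is imposed, as loops are nonassociative), each of these verifications is a direct computation from the formula $\widehat{\tau}(x,y)=\tau(xy)-\tau(x)-\tau(y)$ together with $\tau(1)=0$.

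First I would check normalization: for any $x\in F$, expanding $\widehat{\tau}(1,x)$ gives $\tau(1\cdot x)-\tau(1)-\tau(x)$, which collapses to $0$ because $1\cdot x = x$ and $\tau(1)=0$; the computation for $\widehat{\tau}(x,1)$ is symmetric. Hence $\widehat{\tau}\in\coc{F}{A}$, so the map is well defined.

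Next I would verify the homomorphism property. Given $\tau_1,\tau_2\in\zmap{F}{A}$, pointwise additivity of the evaluation $\tau\mapsto\tau(xy)-\tau(x)-\tau(y)$ immediately gives $\widehat{\tau_1+\tau_2}(x,y)=\widehat{\tau_1}(x,y)+\widehat{\tau_2}(x,y)$, so $\widehat{\phantom{x}}$ is a group homomorphism into $(\coc{F}{A},+)$.

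Finally, for the kernel: $\widehat{\tau}=0$ in $\coc{F}{A}$ means $\tau(xy)=\tau(x)+\tau(y)$ for all $x,y\in F$, which is exactly the statement that $\tau$ is a loop homomorphism from $F$ to $A$; note that every such homomorphism automatically lies in $\zmap{F}{A}$ since $\tau(1_F)=0_A$, so $\hom{F}{A}\subseteq\zmap{F}{A}$ and the identification is clean. There is no real obstacle here; the only mild subtlety worth mentioning is confirming that the normalization $\tau(1)=0$ built into $\zmap{F}{A}$ is what makes both the normalization of $\widehat{\tau}$ work and the kernel inclusion $\hom{F}{A}\subseteq\zmap{F}{A}$ tautological.
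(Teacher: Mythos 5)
Your proof is correct and is exactly the routine verification the paper leaves to the reader (the lemma is stated as an observation with no written proof): normalization of $\widehat{\tau}$ from $\tau(1)=0$, pointwise additivity, and the identification of the kernel with $\hom{F}{A}$. No issues.
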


The image $\cob{F}{A} = \widehat{\coc{F}{A}}\cong \zmap{F}{A}/\hom{F}{A}$ is a
subgroup (subspace) of $\coc{F}{A}$, and its elements are referred to as
\emph{coboundaries}.

When $A$ is a field, the vector space $\zmap{F}{A}$ has basis $\{\tau_c;\;c\in
F\setminus\{1\}\}$, where
\begin{equation}\label{Eq:Tau}
    \tau_c:F\to A,\quad
    \tau_c(x) = \left\{\begin{array}{ll}
    1,&\text{ if $x=c$,}\\
    0,&\text{ otherwise.}
    \end{array}\right.
\end{equation}
Hence the vector space $\cob{F}{A}$ is generated by $\{\widehat{\tau_c};\;c\in
F\setminus\{1\}\}$. Observe that for $x$, $y\in F\setminus\{1\}$ we have
\begin{equation}\label{Eq:TauHat}
    \widehat{\tau_c}(x,y) = \left\{\begin{array}{ll}
        1,&\text{ if $xy=c$},\\
        -1,&\text{ if $x=c$ or $y=c$ but not $x=y$},\\
        -2,&\text{ if $x=y=c$},\\
        0,&\text{ otherwise.}
    \end{array}\right.
\end{equation}

Coboundaries play a prominent role in classifications due to this simple
observation:

\begin{lemma}\label{Lm:Coboundaries}
Let $\widehat{\tau}\in\cob{F}{A}$. Then $f:\ext{F}{A}{\theta} \to
\ext{F}{A}{\theta+\widehat{\tau}}$ defined by
\begin{displaymath}
    f(x,a) = (x,a+\tau(x))
\end{displaymath}
is an isomorphism of loops.
\end{lemma}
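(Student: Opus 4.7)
The plan is to verify two things: that $f$ is a bijection, and that $f$ preserves multiplication. Bijectivity is immediate, since $f$ acts as the identity on the $F$-coordinate and as the translation $a\mapsto a+\tau(x)$ (which is a bijection of $A$) on each fiber over $x\in F$; its two-sided inverse is $(x,a)\mapsto (x,a-\tau(x))$.

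For the homomorphism property I would simply expand both sides. On the one hand, using \eqref{Eq:CentralExtension} in $\ext{F}{A}{\theta}$,
\[
    f\bigl((x,a)(y,b)\bigr)=f\bigl(xy,\;a+b+\theta(x,y)\bigr)=\bigl(xy,\;a+b+\theta(x,y)+\tau(xy)\bigr).
\]
On the other hand, using \eqref{Eq:CentralExtension} in $\ext{F}{A}{\theta+\widehat{\tau}}$,
\[
    f(x,a)f(y,b)=\bigl(xy,\;a+\tau(x)+b+\tau(y)+\theta(x,y)+\widehat{\tau}(x,y)\bigr).
\]
The two right-hand sides agree in the $A$-coordinate precisely when $\tau(xy)=\tau(x)+\tau(y)+\widehat{\tau}(x,y)$, which is the defining identity of $\widehat{\tau}$.

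There is no real obstacle: the coboundary operator $\widehat{\phantom{x}}$ was defined exactly so that this verification works, and no loop axiom beyond \eqref{Eq:CentralExtension} is needed. The only mild point to mention is that $f$ is well-defined as a map $F\times A\to F\times A$ because $\tau$ takes values in $A$, and that the computation above is purely formal in the abelian group $A$, so the argument goes through whether or not $F$ is associative.
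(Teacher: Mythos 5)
Your verification is correct and is exactly the routine computation the paper has in mind (the paper omits the proof, calling the lemma a ``simple observation''): bijectivity via the inverse $(x,a)\mapsto(x,a-\tau(x))$, and the multiplicativity check reducing to the defining identity $\widehat{\tau}(x,y)=\tau(xy)-\tau(x)-\tau(y)$. Nothing is missing.
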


The converse of Lemma \ref{Lm:Coboundaries} does not hold, making the
classification of loops up to isomorphism nontrivial even in highly structured
subvarieties, such as groups. Nevertheless it is clear that it suffices to
consider cocycles modulo coboundaries, and we therefore define the
\emph{(second) cohomology} $\coh{F}{A} = \coc{F}{A}/\cob{F}{A}$.

\section{The action of the automorphism groups and separability}\label{Sc:Action}

Let $\aut{F,A}=\aut{F}\times\aut{A}$. The group $\aut{F,A}$ acts on
$\coc{F}{A}$ via
\begin{displaymath}
    \theta\mapsto\act{\alpha}{\beta}{\theta},\quad
    \act{\alpha}{\beta}{\theta}:(x,y)\mapsto \beta\theta(\alpha^{-1}x,\alpha^{-1}y).
\end{displaymath}
Indeed, we have $\act{\alpha\gamma}{\beta\delta}{\theta} =
\act{\alpha}{\beta}{(\act{\gamma}{\delta}{\theta})}$, and
$\act{\alpha}{\beta}{(\theta+\mu)} = \act{\alpha}{\beta}{\theta} +
\act{\alpha}{\beta}{\mu}$. Since
\begin{displaymath}
    \act{\alpha}{\beta}{\widehat{\tau}} = \widehat{\beta\tau\alpha^{-1}},
\end{displaymath}
the action of $\aut{F,A}$ on $\coc{F}{A}$ induces an action on $\cob{F}{A}$ and
on $\coh{F}{A}$. Moreover:

\begin{lemma}\label{Lm:ActionToIso}
Let $(\alpha,\beta)\in\aut{F,A}$. Then
$f:\ext{F}{A}{\theta}\to\ext{F}{A}{\act{\alpha}{\beta}{\theta}}$ defined by
\begin{displaymath}
    f(x,a)=(\alpha x, \beta a)
\end{displaymath}
is an isomorphism of loops.
\end{lemma}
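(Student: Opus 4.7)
The plan is to verify directly that $f$ is both a bijection and a homomorphism, with the action formula designed precisely to make the multiplication compatible.

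First I would observe bijectivity: since $\alpha\in\aut{F}$ and $\beta\in\aut{A}$ are bijections of $F$ and $A$ respectively, the map $f(x,a)=(\alpha x,\beta a)$ is a bijection of $F\times A$. The neutral element is preserved because $\alpha(1)=1$ (loop automorphisms fix the identity) and $\beta(0)=0$.

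Next I would check that $f$ preserves multiplication. Applying $f$ after multiplying in $\ext{F}{A}{\theta}$ gives
\begin{displaymath}
    f\bigl((x,a)(y,b)\bigr)=f(xy,a+b+\theta(x,y))=\bigl(\alpha(xy),\beta a+\beta b+\beta\theta(x,y)\bigr),
\end{displaymath}
where I used that $\beta$ is a group homomorphism on $A$. Multiplying first in $\ext{F}{A}{\act{\alpha}{\beta}{\theta}}$ and then applying $f$ to each factor gives
\begin{displaymath}
    f(x,a)\cdot f(y,b)=(\alpha x,\beta a)(\alpha y,\beta b)=\bigl(\alpha x\cdot \alpha y,\;\beta a+\beta b+\act{\alpha}{\beta}{\theta}(\alpha x,\alpha y)\bigr).
\end{displaymath}
The first coordinates agree because $\alpha$ is a loop homomorphism, so $\alpha(xy)=\alpha x\cdot \alpha y$. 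The second coordinates agree because, by the defining formula of the action,
\begin{displaymath}
    \act{\alpha}{\beta}{\theta}(\alpha x,\alpha y)=\beta\theta(\alpha^{-1}\alpha x,\alpha^{-1}\alpha y)=\beta\theta(x,y).
\end{displaymath}
Hence $f((x,a)(y,b))=f(x,a)\cdot f(y,b)$, completing the proof.

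There is no real obstacle here; the lemma is essentially a consistency check confirming that the definition of the action $\theta\mapsto\act{\alpha}{\beta}{\theta}$ was chosen correctly so that conjugating the coordinates by $(\alpha,\beta)$ transports the cocycle $\theta$ to $\act{\alpha}{\beta}{\theta}$. The only subtle point is tracking the $\alpha^{-1}$ in the definition of the action and verifying that it cancels against the $\alpha$ introduced by $f$ on the input side.
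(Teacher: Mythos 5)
Your proof is correct and follows essentially the same route as the paper: a direct verification that $f((x,a)(y,b))=f(x,a)*f(y,b)$ using $\alpha(xy)=\alpha(x)\alpha(y)$, additivity of $\beta$, and the cancellation $\act{\alpha}{\beta}{\theta}(\alpha x,\alpha y)=\beta\theta(x,y)$. Your explicit remarks on bijectivity and preservation of the neutral element are left implicit in the paper but add nothing problematic.
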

\begin{proof}
Let $\cdot$ be the multiplication in $\ext{F}{A}{\theta}$ and $*$ the
multiplication in $\ext{F}{A}{\act{\alpha}{\beta}{\theta}}$. Then
\begin{align*}
    f((x,a)\cdot(y,b)) &= f(xy,a+b+\theta(x,y)) = (\alpha(xy),\beta(a+b+\theta(x,y)))\\
    &= (\alpha(x)\alpha(y), \beta(a)+\beta(b) + \beta\theta(\alpha^{-1}\alpha x,\alpha^{-1}\alpha y))\\
    &= (\alpha(x)\alpha(y), \beta(a)+\beta(b) + \act{\alpha}{\beta}{\theta}(\alpha x,\alpha y))\\
    &= (\alpha x,\beta a)*(\alpha y,\beta b) = f(x,a)*f(y,b).
\end{align*}
\end{proof}

As in \S \ref{Sc:Introduction}, write $\theta\sim\mu$ if there is
$(\alpha,\beta)\in\aut{F,A}$ such that
$\act{\alpha}{\beta}{\theta}-\mu\in\cob{F}{A}$. Then $\sim$ is an equivalence
relation on $\coc{F}{A}$, and the equivalence class of $\theta$ is
\begin{displaymath}
    [\theta]_\sim = \bigcup_{(\alpha,\beta)\in\aut{F,A}} (\act{\alpha}{\beta}{\theta} +
    \cob{F}{A}).
\end{displaymath}
By Lemmas \ref{Lm:Coboundaries} and \ref{Lm:ActionToIso}, if $\theta\sim\mu$
then $\ext{F}{A}{\theta}\cong\ext{F}{A}{\mu}$. We say that $\theta$ is
\emph{separable} if the converse is also true, that is, if
$\ext{F}{A}{\theta}\cong\ext{F}{A}{\mu}$ if and only if $\theta\sim\mu$.

We remark that there exists an inseparable cocycle already in $\coc{\mathbb
Z_6}{\mathbb Z_2}$. In the rest of this section we describe situations that
guarantee separability.

\begin{proposition}\label{Pr:Admissible}
Let $Q=\ext{F}{A}{\theta}$. If $\aut{Q}$ acts transitively on $\{K\le Z(Q);\;
K\cong A$, $Q/K\cong F\}$ then $\theta$ is separable.
\end{proposition}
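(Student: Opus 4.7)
The plan is to take an abstract isomorphism $g:\ext{F}{A}{\theta}\to\ext{F}{A}{\mu}$, use the transitivity hypothesis to replace it by one that respects the canonical copies of $A$, and then read off $(\alpha,\beta)\in\aut{F,A}$ together with a coboundary $\widehat{\tau}$ witnessing $\theta\sim\mu$ by expanding the homomorphism equation in coordinates.

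Set $Q=\ext{F}{A}{\theta}$, $Q'=\ext{F}{A}{\mu}$, $A_0=\{1\}\times A\le Q$ and $A_0'=\{1\}\times A\le Q'$. From the multiplication law \eqref{Eq:CentralExtension} both $A_0$ and $A_0'$ are central subloops isomorphic to $A$ with quotient isomorphic to $F$. Given an isomorphism $g:Q\to Q'$, the subloop $g(A_0)\le Q'$ shares these properties; since $Q'\cong Q$, the transitivity hypothesis (applied to $Q'$) furnishes $\psi\in\aut{Q'}$ with $\psi(g(A_0))=A_0'$. Replacing $g$ by $h:=\psi\circ g$, I may assume from the outset that $h(A_0)=A_0'$.

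Because $h$ sends $A_0$-cosets to $A_0'$-cosets, it induces an automorphism $\alpha\in\aut{F}$ on the quotients and restricts to an automorphism $\beta\in\aut{A}$ on $A_0$. Writing $h(x,a)=(\alpha(x),\sigma(x,a))$ and using that $(1,a)$ is central, the identity $h((x,0)(1,a))=h(x,0)\ast h(1,a)$ (where $\ast$ is the product in $Q'$) forces $\sigma(x,a)=\tau(x)+\beta(a)$ with $\tau(x):=\sigma(x,0)$; since $h(1,0)=(1,0)$, we have $\tau\in\zmap{F}{A}$.

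Finally, expanding $h((x,a)(y,b))=h(x,a)\ast h(y,b)$ and comparing second coordinates yields
\[
\tau(xy)-\tau(x)-\tau(y)=\mu(\alpha x,\alpha y)-\beta\theta(x,y).
\]
Substituting $x=\alpha^{-1}u$, $y=\alpha^{-1}v$ and using that $\alpha^{-1}$ is a homomorphism of $F$ rewrites the left side as $\widehat{\tau\circ\alpha^{-1}}(u,v)$ and the right side as $\mu(u,v)-\act{\alpha}{\beta}{\theta}(u,v)$, so $\mu-\act{\alpha}{\beta}{\theta}=\widehat{\tau\circ\alpha^{-1}}\in\cob{F}{A}$, i.e.\ $\theta\sim\mu$. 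The only nonroutine step is the reduction in the second paragraph: without the transitivity assumption there is no way to force $h$ to identify the distinguished copies of $A$ inside $Q$ and $Q'$, and this is precisely where separability can fail (as in the $\coc{\mathbb Z_6}{\mathbb Z_2}$ example mentioned above). The remaining steps are mechanical bookkeeping, with the only pitfall being to keep the $\alpha^{-1}$ inside the arguments of $\theta$ consistent with the action $\act{\alpha}{\beta}{\theta}$.
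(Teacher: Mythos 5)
Your proof is correct and follows essentially the same route as the paper: use the transitivity hypothesis to normalize the isomorphism so that it carries the distinguished central copy of $A$ onto the one in $\ext{F}{A}{\mu}$, then extract $\alpha\in\aut{F}$, $\beta\in\aut{A}$ and $\tau\in\zmap{F}{A}$ and read off $\mu-\act{\alpha}{\beta}{\theta}=\widehat{\tau\alpha^{-1}}$. The only cosmetic differences are that you postcompose with an automorphism of the codomain (transferring transitivity along $Q\cong Q'$) where the paper precomposes with one of the domain, and that you expand the homomorphism identity directly rather than factoring $f$ through Lemmas \ref{Lm:Coboundaries} and \ref{Lm:ActionToIso}.
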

\begin{proof}
Let $Q=\ext{F}{A}{\theta}$, and let $f:Q\to\ext{F}{A}{\mu}$ be an isomorphism.
Let $K=f^{-1}(1\times A)$. By our assumption, there is $g\in\aut{Q}$ such that
$g(1\times A) = K$. Then $fg:Q\to\ext{F}{A}{\mu}$ is an isomorphism mapping
$1\times A$ onto itself. We can therefore assume without loss of generality
that already $f$ has this property.

Denote by $\cdot$ the multiplication in $Q$ and by $*$ the multiplication in
$\ext{F}{A}{\mu}$. Define $\beta:A\to A$ by $(1,\beta(a)) = f(1,a)$. Then
\begin{displaymath}
    (1,\beta(a+b)){=}f(1,a+b){=}f((1,a)\cdot(1,b)){=}f(1,a)*f(1,b)
    {=}(1,\beta a)*(1,\beta b){=}(1,\beta a + \beta b),
\end{displaymath}
which means that $\beta\in\aut{A}$.

Define $\tau:F\to A$ and $\alpha:F\to F$ by $f(x,0) = (\alpha x, \tau x)$.
Since $f(1,0)=(1,0)$, we have $\tau\in\zmap{F}{A}$. Moreover, calculating
modulo $A$ in both loops, we have
\begin{displaymath}
    (\alpha(xy),0) \equiv f(xy,0){\equiv}f((x,0)\cdot(y,0))
    {\equiv}f(x,0)*f(y,0){\equiv}(\alpha x,0)*(\alpha y,0){\equiv}(\alpha(x)\alpha(y),0),
\end{displaymath}
and $\alpha\in\aut{F}$ follows.

The isomorphism $f$ satisfies
\begin{displaymath}
    f(x,a) = f((1,a)\cdot(x,0)) = f(1,a)*f(x,0) = (1,\beta a)*(\alpha x, \tau
    x) = (\alpha x, \beta a + \tau x).
\end{displaymath}
If is therefore the composition of the isomorphism $(x,a)\mapsto
(x,a+\beta^{-1}\tau x)$ of Lemma \ref{Lm:Coboundaries} (with $\beta^{-1}\tau$
in place of $\tau$) and of the isomorphism $(x,a)\mapsto (\alpha x,\beta a)$ of
Lemma \ref{Lm:ActionToIso}. This means that $\mu =
\act{\alpha}{\beta}{(\theta+\widehat{\beta^{-1}\tau})}$, so $\mu \in
\act{\alpha}{\beta}{\theta}+\cob{F}{A}$, $\theta\sim\mu$.
\end{proof}

We now investigate separability in abelian groups. The next two results can be
proved in many ways from the Fundamental Theorem of Finitely Generated Abelian
Groups, which we use without warning.

\begin{lemma}\label{Lm:CyclicComplement} Let $p$ be a prime, and let
\begin{equation}\label{Eq:ADecomposition}
    A = \mathbb Z_{p^{e_1}}\times\cdots\times\mathbb Z_{p^{e_n}}
\end{equation}
be an abelian $p$-group, where $e_1\le\cdots\le e_n$. Let $x\in A$ be an
element of order $p$. Then there exists a unique integer $e_j$ such that: there
is a complemented cyclic subgroup $B\le A$ satisfying $x\in B$ and
$|B|=p^{e_j}$. Moreover,
\begin{displaymath}
    A/\langle x\rangle\cong \mathbb Z_{p^{f_1}}\times\cdots\times \mathbb
    Z_{p^{f_n}},
\end{displaymath}
where $f_i=e_i$ for every $i\ne j$, and $f_j=e_j-1$.
\end{lemma}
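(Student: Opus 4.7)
My plan is to identify the unique $e_j$ with $h+1$, where $h = \max\{k \ge 0 \mid x \in p^k A\}$ denotes the $p$-adic height of $x$ in $A$; both the uniqueness claim and the existence of the cyclic summand then follow naturally, and the quotient description drops out of the resulting decomposition.

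For uniqueness, suppose $A = B \oplus C$ with $B$ cyclic of order $p^m$ containing $x$. Since $x$ has order $p$ in $B$, it has height $m-1$ within $B$; and any presentation $x = p^k y$ in $A$ decomposes via $y = y_B + y_C$ as $x = p^k y_B$ (so $p^k y_C = 0$), which shows that the height of $x$ in $A$ equals its height in $B$. Hence $m = h+1$ is forced, proving uniqueness of the value $e_j$.

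For existence, fix a basis $g_1, \ldots, g_n$ of $A$ with $g_i$ of order $p^{e_i}$ and expand $x = \sum a_i p^{e_i - 1} g_i$ with $a_i \in \{0, \ldots, p-1\}$ (possible because $x$ has order $p$). A coordinate computation of $p^k A$ shows $h = \min\{e_i - 1 \mid a_i \ne 0\}$; let $j_0$ be the least such index, so $e_{j_0} = h+1$. I then replace $g_{j_0}$ by
\[ y \;=\; g_{j_0} + \sum_{i > j_0} a_{j_0}^{-1} a_i p^{e_i - e_{j_0}} g_i, \]
whose exponents $e_i - e_{j_0}$ are nonnegative thanks to the ordering $e_1 \le \cdots \le e_n$. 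A short verification gives $a_{j_0} p^{e_{j_0}-1} y = x$, and $y$ has order $p^{e_{j_0}}$ because its $g_{j_0}$-coordinate is a unit. The set $\{g_1, \ldots, g_{j_0 - 1}, y, g_{j_0 + 1}, \ldots, g_n\}$ still generates $A$ (since $g_{j_0}$ is recoverable from $y$ and the remaining $g_i$), and because the product of its cyclic orders equals $|A|$, the sum is automatically direct. The cyclic summand $\langle y \rangle$ of order $p^{e_{j_0}}$ contains $x$, and the quotient then reads $A/\langle x\rangle \cong \bigoplus_{i \ne j_0}\langle g_i\rangle \oplus \langle y\rangle/\langle x\rangle$ with $\langle y\rangle/\langle x\rangle \cong \mathbb{Z}_{p^{e_{j_0} - 1}}$.

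The main obstacle is engineering $y$ so that the equation $a_{j_0} p^{e_{j_0}-1} y = x$ holds in $A$ while $y$ retains a unit $g_{j_0}$-coordinate (needed for $\langle y\rangle$ to be a full cyclic summand of order $p^{e_{j_0}}$). The ordering of the $e_i$ and the minimality of $j_0$ are exactly what make the required coefficients well-defined integers and force the new generating set to be a basis.
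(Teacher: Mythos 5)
Your proof is correct and follows essentially the same route as the paper's: both expand $x=\sum_i a_i p^{e_i-1}g_i$, take the least index $j$ with $a_j\ne 0$ (which, by the ordering of the $e_i$, minimizes $e_i-1$), and form the complemented cyclic subgroup $B=\langle y\rangle$ where your $y$ is just a unit multiple of the paper's $y=\sum_{i\ge j}a_i p^{e_i-e_j}g_i$, with the remaining coordinate factors as the complement. Your explicit height argument for the uniqueness of $e_j$ (and the explicit quotient computation) is a welcome addition, as the paper's proof leaves both of these points implicit.
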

\begin{proof}
Every element $x\in A$ of order $p$ is of the form
\begin{displaymath}
    x = (x_1p^{e_1-1},\dots,x_np^{e_n-1}),
\end{displaymath}
where $x_i\in\{0,\dots,p-1\}$ for every $1\le i\le n$, and where $x_i\ne 0$ for
some $1\le i\le n$. Let $j$ be the least integer such that $x_j\ne 0$. Consider
the element
\begin{displaymath}
    y = \frac{x}{p^{e_j-1}} =
    (0,\dots,0,x_j,x_{j+1}p^{e_{j+1}-e_j},\dots,x_np^{e_n-e_j}).
\end{displaymath}
Then $B=\langle y\rangle$ contains $x$, $|B|=p^{e_j}$, and
\begin{displaymath}
    C = \mathbb Z_{p^{e_1}}\times\cdots\times \mathbb Z_{p^{e_{j-1}}}\times 0
    \times \mathbb Z_{p^{e_{j+1}}}\times\cdots\times \mathbb Z_{p^{e_n}}
\end{displaymath}
is a complement of $B$ in $A$ (that is, $B\cap C=0$ and $\langle B\cup C\rangle
= A$).
\end{proof}

\begin{proposition}\label{Pr:AdmissibleAbelian}
Let $A$ be a finite abelian group. For a prime $p$ dividing $|A|$ and for a
finite abelian group $F$ of order $|A|/p$, let
\begin{displaymath}
    X(p,F) = \{x\in A;\;|x|=p\text{ and }A/\langle x\rangle\cong F\}.
\end{displaymath}
Then the sets $X(p,F)$ that are nonempty are precisely the orbits of the action
of $\aut{A}$ on $A$.
\end{proposition}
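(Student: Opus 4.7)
The plan has two parts. For the easy direction, each $X(p,F)$ is $\aut{A}$-invariant: if $\phi\in\aut{A}$ and $x\in X(p,F)$, then $\phi(x)$ has order $p$, and $\phi$ descends to an isomorphism $A/\langle x\rangle\to A/\langle\phi(x)\rangle$, so $\phi(x)\in X(p,F)$. The substance of the proposition is the converse: any two elements of $X(p,F)$ lie in a single $\aut{A}$-orbit. (Orbits of elements of non-prime order or the zero orbit are not of the form $X(p,F)$; the statement is implicitly restricted to the elements of prime order.)

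I would first reduce to the case where $A$ is a $p$-group. Write $A=A_p\times A_{p'}$ as the product of its $p$-primary part and its complement. Every element of order $p$ lies in $A_p$, and $A/\langle x\rangle\cong (A_p/\langle x\rangle)\times A_{p'}$, so $A/\langle x\rangle\cong A/\langle y\rangle$ if and only if $A_p/\langle x\rangle\cong A_p/\langle y\rangle$. Since every automorphism of $A$ preserves the primary decomposition and every automorphism of $A_p$ extends by the identity on $A_{p'}$, the transitivity claim reduces to the case that $A$ is a finite abelian $p$-group.

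With $A=\mathbb Z_{p^{e_1}}\times\cdots\times\mathbb Z_{p^{e_n}}$ and $x,y\in A$ of order $p$ with isomorphic quotients, I would apply Lemma \ref{Lm:CyclicComplement} to each, obtaining data $(e_j,B_x,C_x)$ for $x$ and $(e_{j'},B_y,C_y)$ for $y$. The multiset of exponents of $A/\langle x\rangle$ is obtained from $(e_1,\ldots,e_n)$ by decreasing the chosen $e_j$ by one, and this multiset operation is reversible, so the value $e_j$ is recoverable from the isomorphism type of the quotient. Hence $e_j=e_{j'}$, $|B_x|=|B_y|=p^{e_j}$, and the complements $C_x,C_y$ are abelian $p$-groups of the same type (that of $A$ with one copy of $e_j$ removed), hence isomorphic.

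It remains to assemble an automorphism $\phi$ of $A$ sending $x$ to $y$. I would take $\phi=\phi_0\times\phi_1$ relative to the decompositions $A=B_x\times C_x$ and $A=B_y\times C_y$, where $\phi_0:B_x\to B_y$ is an isomorphism with $\phi_0(x)=y$ and $\phi_1:C_x\to C_y$ is any isomorphism. To produce $\phi_0$, fix generators $b_x,b_y$ of $B_x,B_y$ and write $x=kp^{e_j-1}b_x$, $y=\ell p^{e_j-1}b_y$ with $k,\ell$ coprime to $p$; then the unique homomorphism sending $b_x\mapsto k^{-1}\ell\,b_y$ is the required isomorphism. The main subtlety in writing this up cleanly is the combinatorial observation that $e_j$ is determined by the quotient type; once that is in hand, everything else is a direct application of Lemma \ref{Lm:CyclicComplement} and the Fundamental Theorem.
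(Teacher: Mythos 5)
Your proposal is correct and follows essentially the same route as the paper's proof: reduce to the $p$-primary component via the decomposition of $\aut{A}$, invoke Lemma \ref{Lm:CyclicComplement} to get complemented cyclic subgroups $B_x$, $B_y$ of equal order, and assemble the automorphism from an isomorphism $B_x\to B_y$ sending $x$ to $y$ together with an isomorphism of the complements. Your explicit justification that $e_j$ is recoverable from the isomorphism type of the quotient (via the multiset of exponents) is a welcome bit of added care over the paper's terser ``$|B_x|=|B_y|$ since $A/\langle x\rangle\cong A/\langle y\rangle$,'' but it is not a different argument.
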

\begin{proof}
For a prime $p$, let $A_p$ be the $p$-primary component of $A$. Then
$A=A_{p_1}\times\cdots\times A_{p_m}$, for some distinct primes $p_1$, $\dots$,
$p_m$, and $\aut{A} = \aut{A_{p_1}}\times\cdots\times\aut{A_{p_n}}$. (For a
detailed proof, see \cite[Lemma 2.1]{HR}.) We can therefore assume that $A=A_p$
is a $p$-group.

It is obvious that every orbit of $\aut{A}$ is contained in one of the sets
$X(p,F)$. It therefore suffices to prove that if $x$, $y\in X(p,F)$ then there
is $\varphi\in\aut{A}$ such that $\varphi(x)=y$.

Let $A$ be as in \eqref{Eq:ADecomposition}. If $A$ is cyclic of order $p^{e_1}$
then $A/\langle x\rangle\cong \mathbb Z_{p^{e_1-1}}$, and we can assume that
$x=ap^{e_1-1}$, $y = bp^{e_1-1}$, where $1\le a$, $b\le p-1$. The automorphism
of $A$ determined by $1\mapsto b/a$ (modulo $p$) then maps $a$ to $b$ and hence
$x$ to $y$.

Assume that $n>1$. Let $B_x$, $B_y$ be the complemented cyclic subgroups $B$
obtained by Lemma \ref{Lm:CyclicComplement} for $x$, $y$, respectively. Then
$|B_x|=|B_y|$ since $A/\langle x\rangle \cong A/\langle y\rangle$, and hence
the integer $e_j$ determined by Lemma \ref{Lm:CyclicComplement} is the same for
$x$ and $y$. We can in fact assume that already $j$ is the same. Furthermore,
we can assume that the isomorphism from
\begin{displaymath}
    A/\langle x\rangle \cong \mathbb Z_{p^{e_1}}\times\cdots\times\mathbb Z_{p^{e_{j-1}}}
    \times B_x/\langle x\rangle \times \mathbb Z_{p^{e_{j+1}}}\times\cdots
    \times \mathbb Z_{p^{e_n}}
\end{displaymath}
to
\begin{displaymath}
    A/\langle y\rangle \cong \mathbb Z_{p^{e_1}}\times\cdots\times\mathbb Z_{p^{e_{j-1}}}
    \times B_y/\langle y\rangle \times \mathbb Z_{p^{e_{j+1}}}\times\cdots
    \times \mathbb Z_{p^{e_n}}
\end{displaymath}
is componentwise, and maps $B_x/\langle x\rangle$ to $B_y/\langle y\rangle$. We
can then extend $B_x/\langle x\rangle\to B_y/\langle y\rangle$ to an
isomorphism $B_x\to B_y$ while sending $x$ to $y$ by the case $n=1$, and hence
obtain the desired automorphism of $A$.
\end{proof}

\begin{corollary}\label{Cr:AdmissibleAbelian}
Let $Q=\ext{F}{A}{\theta}$ be an abelian group, $A=\mathbb Z_p$, $p$ a prime.
Then $\theta$ is separable.
\end{corollary}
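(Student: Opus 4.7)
The plan is to deduce the corollary by composing the two main results of the section: Proposition \ref{Pr:Admissible} reduces separability to a transitivity condition on central subloops, and Proposition \ref{Pr:AdmissibleAbelian} supplies exactly this transitivity in the abelian-group setting.

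First I would observe that since $Q=\ext{F}{A}{\theta}$ is an abelian group, we have $Z(Q)=Q$, and $F\cong Q/A$ is an abelian group of order $|Q|/p$. Thus Proposition \ref{Pr:Admissible} asks us to check that $\aut{Q}$ acts transitively on the set
\begin{displaymath}
    \mathcal{S}=\{K\le Q;\;K\cong \mathbb Z_p,\;Q/K\cong F\}.
\end{displaymath}
Every $K\in\mathcal{S}$ has the form $K=\langle x\rangle$ for a unique (up to choice of generator) element $x\in Q$ of order $p$, and under the notation of Proposition \ref{Pr:AdmissibleAbelian} this element lies in $X(p,F)$. Conversely every $x\in X(p,F)$ gives rise to some $K=\langle x\rangle\in\mathcal{S}$.

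Next I would apply Proposition \ref{Pr:AdmissibleAbelian} directly: the nonempty set $X(p,F)$ is a single orbit of $\aut{Q}$ acting on $Q$. Hence, for any two subgroups $K_1=\langle x\rangle$, $K_2=\langle y\rangle$ in $\mathcal{S}$, there exists $\varphi\in\aut{Q}$ with $\varphi(x)=y$, and therefore $\varphi(K_1)=K_2$. This shows $\aut{Q}$ acts transitively on $\mathcal{S}$, so Proposition \ref{Pr:Admissible} yields that $\theta$ is separable.

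There is no real obstacle here: the only point requiring a small argument is to reconcile Proposition \ref{Pr:AdmissibleAbelian} (which speaks about orbits on elements) with the hypothesis of Proposition \ref{Pr:Admissible} (which speaks about orbits on subgroups of $Z(Q)$ isomorphic to $A$ with quotient $F$), and this is immediate because subgroups of order $p$ are in bijection with their generators up to multiplication by a unit, and $Z(Q)=Q$.
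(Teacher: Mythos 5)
Your proof is correct and is exactly the paper's argument: the paper's proof of this corollary is the one-line instruction ``Combine Propositions \ref{Pr:Admissible} and \ref{Pr:AdmissibleAbelian},'' and your write-up simply supplies the (routine but worth stating) translation between the orbit of elements $X(p,F)$ and the set of central subgroups $K\cong\mathbb Z_p$ with $Q/K\cong F$.
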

\begin{proof}
Combine Propositions \ref{Pr:Admissible} and \ref{Pr:AdmissibleAbelian}.
\end{proof}

Finally, we show that all cocycles are separable in ``small'' situations.

\begin{lemma}\label{Lm:Index2}
There is no loop $Q$ with $[Q:Z(Q)]=2$.
\end{lemma}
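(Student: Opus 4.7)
The plan is to derive a contradiction by representing $Q$ as a central extension and showing that the only admissible cocycle forces $Q$ itself to be abelian. Assume $Q$ is a loop with $[Q:Z(Q)]=2$. Then $F := Q/Z(Q)$ is a loop of order $2$, and the only such loop is the group $\mathbb{Z}_2$. Setting $A := Z(Q)$, the theorem at the start of \S\ref{Sc:Cocycles} supplies a normalized cocycle $\theta:F\times F\to A$ with $Q\cong\ext{F}{A}{\theta}$. Writing $F=\{1,x\}$, the normalization conditions \eqref{Eq:LoopCocycle} force $\theta(1,y)=\theta(y,1)=0$ for every $y\in F$, so $\theta$ can be nonzero only at $(x,x)$.

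The key step is to verify directly from \eqref{Eq:CentralExtension} that $\ext{F}{A}{\theta}$ is actually an abelian group. Commutativity is automatic: $\theta(u,v)=\theta(v,u)$ trivially, since both sides vanish unless $u=v=x$, in which case both equal $\theta(x,x)$. Associativity amounts to checking the cocycle identity
\begin{displaymath}
    \theta(u,v)+\theta(uv,w)=\theta(v,w)+\theta(u,vw)
\end{displaymath}
for all $u,v,w\in F$. When any one of $u,v,w$ equals $1$ both sides collapse to the same value (either $0$ or $\theta(x,x)$), and the residual case $u=v=w=x$ yields $\theta(x,x)+\theta(1,x)=\theta(x,x)+\theta(x,1)$, which also holds.

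Since $\ext{F}{A}{\theta}$ is then an abelian group, its center is the whole loop; transporting back along the isomorphism $Q\cong\ext{F}{A}{\theta}$ gives $Z(Q)=Q$, contradicting $[Q:Z(Q)]=2$. The only potentially delicate point is the associativity case check, but this is extremely short because $|F|=2$ and normalization eliminates all but the one genuinely nontrivial combination of arguments.
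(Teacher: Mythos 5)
Your proof is correct, and at its core it is the same computation as the paper's: the paper writes every element of $Q$ as $a^iz$ with $z\in Z(Q)$ and checks the associative law case by case, the only nontrivial instance $i=j=k=1$ being settled because $a^2\in Z(Q)$; your verification of the cocycle identity for $F=\{1,x\}$, with $\theta$ supported only at $(x,x)$ by normalization, is exactly that computation transported through the central-extension correspondence. The genuine difference is the endgame. The paper stops at ``$Q$ is a group'' and then invokes the classical fact that a group with cyclic central quotient is abelian, whereas you also verify commutativity directly from the cocycle (immediate, since $F$ and $A$ are abelian and $\theta(u,v)=\theta(v,u)$ for a cocycle supported on the diagonal), and so conclude $Z(Q)=Q$ with no outside group theory. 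What your route buys is self-containedness on that point; what it costs is reliance on the extension theorem at the start of \S\ref{Sc:Cocycles}, which the paper states without proof (it is standard, so this is acceptable). One small step you should make explicit: before that theorem applies you need $A=Z(Q)$ to be an abelian group, which does follow from the defining identities of the center (commutativity and associativity restrict to $Z(Q)$), but deserves a sentence.
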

\begin{proof}
Assume, for a contradiction, that $|Q/Z(Q)|=2$, and let $a\in Q\setminus Z(Q)$.
Then every element of $Q$ can be written as $a^iz$, where $i\in\{0,1\}$ and
$z\in Z(Q)$. For every $i$, $j$, $k\in\{0,1\}$ and $z_1$, $z_2$, $z_3\in Z(Q)$
we have $a^iz_1\cdot (a^jz_2\cdot a^kz_3) = a^i(a^ja^k)\cdot z_1z_2z_3$, and
similarly, $(a^iz_1\cdot a^jz_2)\cdot a^kz_3 = (a^ia^j)a^k\cdot z_1z_2z_3$. The
two expressions are equal if any of $i$, $j$, $k$ vanishes. So it remains to
discuss the case $i=j=k=1$. But then $a(aa) = (aa)a$, because $a^2\in Z(Q)$.
Hence $Q$ is a group. It is well known that if $Q$ is a group and $Q/Z(Q)$ is
cyclic then $Q=Z(Q)$, a contradiction.
\end{proof}

Lemma \ref{Lm:Index2} cannot be improved: for every odd prime $p$ there is a
nonassociative loop $Q$ such that $|Q/Z(Q)|=p$, cf. Theorem \ref{Th:2q}.

\begin{lemma}\label{Lm:Admissible}
Let $Q=\ext{F}{A}{\theta}$, $A=\mathbb Z_p$, $p$ a prime. Assume
further that one of the following conditions is satisfied:
\begin{enumerate}
\item[(i)] $|Q|=p$,
\item[(ii)] $|Q|=pq$, where $q$ is a prime,
\item[(iii)] $[Q:Z(Q)]\le 2$,
\item[(iv)] $|Q|<12$.
\end{enumerate}
Then $\theta$ is separable.
\end{lemma}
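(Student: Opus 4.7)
My plan is to invoke Proposition \ref{Pr:Admissible} in each case, which reduces separability of $\theta$ to showing that $\aut{Q}$ acts transitively on the set
\[
    \mathcal{S}(Q) = \{K\le Z(Q);\, K\cong A,\, Q/K\cong F\}.
\]
Since $A = \mathbb Z_p$, the members of $\mathcal{S}(Q)$ are precisely the subgroups of order $p$ inside the abelian group $Z(Q)$ whose quotient in $Q$ is isomorphic to $F$. I will argue in every case that either $|\mathcal{S}(Q)| = 1$ (so transitivity is automatic) or $Q$ itself is an abelian group, in which case Corollary \ref{Cr:AdmissibleAbelian} supplies the transitivity: when $Q$ is abelian, any two $K_1,K_2\in\mathcal{S}(Q)$ are generated by elements of the single $\aut{Q}$-orbit $X(p,F)$ furnished by Proposition \ref{Pr:AdmissibleAbelian}, and any automorphism carrying one generator to the other carries $K_1$ to $K_2$.

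Case (i) is immediate, as $|Q| = p$ forces $Q = A$ with $F$ trivial, so $\mathcal{S}(Q) = \{A\}$. Case (iii) is also immediate: Lemma \ref{Lm:Index2} rules out $[Q:Z(Q)] = 2$, so the hypothesis collapses to $Q = Z(Q)$, an abelian group. For case (ii), the inclusion $A\le Z(Q)$ combined with the identity $|Q| = |Z(Q)|\cdot|Q/Z(Q)|$ (valid since $Z(Q)$ is a normal subloop) forces $|Z(Q)|\in\{p,pq\}$: if $|Z(Q)| = p$ then $Z(Q) = A$ has a unique subgroup of order $p$ and $|\mathcal{S}(Q)| = 1$, while if $|Z(Q)| = pq$ then $Q$ is abelian and we are reduced to case (iii).

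For case (iv), I enumerate by $m := |Q|/p = |F|\in\{1,\ldots,5\}$. The value $m = 1$ is case (i), and $m\in\{2,3,5\}$ prime falls under case (ii). The only residual possibility is $m = 4$, which forces $p = 2$ and $|Q| = 8$. Here $|Z(Q)|\in\{2,4,8\}$: the value $|Z(Q)| = 4$ is excluded by Lemma \ref{Lm:Index2}, the value $|Z(Q)| = 2$ yields $Z(Q) = A$ and $|\mathcal{S}(Q)| = 1$, and the value $|Z(Q)| = 8$ places us back in case (iii). The only point of subtlety throughout is this bookkeeping for $|Q| = 8$, which depends crucially on Lemma \ref{Lm:Index2} to eliminate the intermediate possibility $|Z(Q)| = 4$; everything else is a routine dichotomy between $Z(Q) = A$ (uniqueness) and $Q = Z(Q)$ (abelian reduction).
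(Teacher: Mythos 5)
Your proof is correct and follows essentially the same route as the paper: reduce each case to the dichotomy ``$Z(Q)=A$ (so the set of admissible central subgroups is a singleton and Proposition \ref{Pr:Admissible} applies trivially) or $Q$ is abelian (Corollary \ref{Cr:AdmissibleAbelian})'', with Lemma \ref{Lm:Index2} eliminating the intermediate case $[Q:Z(Q)]=2$, exactly as in the paper's treatment of (ii) and of $|Q|=8$ in (iv). The only cosmetic differences are that you unpack Corollary \ref{Cr:AdmissibleAbelian} back into Propositions \ref{Pr:Admissible} and \ref{Pr:AdmissibleAbelian}, and you make explicit the divisibility bookkeeping for $|Z(Q)|$ that the paper leaves implicit.
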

\begin{proof}
When (i) or (iii) hold then $Q$ is an abelian group by Lemma \ref{Lm:Index2},
and so $\theta$ is separable by Corollary \ref{Cr:AdmissibleAbelian}.

Assume that (ii) holds. If $Z(Q)>A$ then $Z(Q)=Q$ and we are done by Corollary
\ref{Cr:AdmissibleAbelian}. Else $Z(Q)=A$ and $\theta$ is separable by
Proposition \ref{Pr:Admissible}, for trivial reasons.

To finish (iv), it remains to discuss the case $|Q|=8$. If $Z(Q)=A$, $\theta$
is separable by Proposition \ref{Pr:Admissible}. If $Z(Q)>A$ then $Z(Q)=Q$ by
Lemma \ref{Lm:Index2}, and we are done by Corollary \ref{Cr:AdmissibleAbelian}.
\end{proof}

\section{The invariant subspaces}\label{Sc:Main}

For $(\alpha,\beta)\in\aut{F,A}$, let
\begin{equation}\label{Eq:Inv}
    \inv{\alpha,\beta} = \{\theta\in\coc{F}{A};\;
    \theta - \act{\alpha}{\beta}{\theta}\in\cob{F}{A}\}.
\end{equation}
For $\emptyset\ne H\subseteq \aut{F,A}$, let
\begin{equation}\label{Eq:InvH}
    \inv{H} = \bigcap_{(\alpha,\beta)\in H} \inv{\alpha,\beta}.
\end{equation}

\begin{lemma}\label{Lm:InvH}
Let $\emptyset\ne H\subseteq \aut{F,A}$. Then $\inv{H} = \inv{\langle
H\rangle}$.
\end{lemma}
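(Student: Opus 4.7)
The plan is to prove the two inclusions separately. The inclusion $\inv{\langle H\rangle}\subseteq\inv{H}$ is immediate from the definition \eqref{Eq:InvH}, because $H\subseteq\langle H\rangle$ means the intersection defining $\inv{\langle H\rangle}$ runs over a superset of the one defining $\inv{H}$. All the work is in the reverse inclusion $\inv{H}\subseteq\inv{\langle H\rangle}$.

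For the reverse inclusion, I would fix $\theta\in\inv{H}$ and show that the set
\begin{displaymath}
    S=\{(\alpha,\beta)\in\aut{F,A};\;\theta-\act{\alpha}{\beta}{\theta}\in\cob{F}{A}\}
\end{displaymath}
is a subgroup of $\aut{F,A}$. Since $H\subseteq S$ by hypothesis, this gives $\langle H\rangle\subseteq S$, and hence $\theta\in\inv{\langle H\rangle}$. To verify the subgroup conditions I would lean on two facts already available in the excerpt: the composition rule $\act{\alpha\gamma}{\beta\delta}{\theta}=\act{\alpha}{\beta}{(\act{\gamma}{\delta}{\theta})}$, and the fact that $\aut{F,A}$ preserves $\cob{F}{A}$ (via $\act{\alpha}{\beta}{\widehat{\tau}}=\widehat{\beta\tau\alpha^{-1}}$).

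The identity is trivial since $\theta-\act{\id}{\id}{\theta}=0\in\cob{F}{A}$. For closure under products, suppose $(\alpha,\beta),(\gamma,\delta)\in S$. Then the decomposition
\begin{displaymath}
    \theta-\act{\alpha\gamma}{\beta\delta}{\theta}
    =(\theta-\act{\alpha}{\beta}{\theta})+\act{\alpha}{\beta}{(\theta-\act{\gamma}{\delta}{\theta})}
\end{displaymath}
exhibits $\theta-\act{\alpha\gamma}{\beta\delta}{\theta}$ as the sum of a coboundary and the image of a coboundary under $(\alpha,\beta)$; since $\cob{F}{A}$ is $(\alpha,\beta)$-invariant, the sum lies in $\cob{F}{A}$. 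For inverses, applying $(\alpha^{-1},\beta^{-1})$ to $\theta-\act{\alpha}{\beta}{\theta}\in\cob{F}{A}$ yields $\act{\alpha^{-1}}{\beta^{-1}}{\theta}-\theta\in\cob{F}{A}$, and negating gives $\theta-\act{\alpha^{-1}}{\beta^{-1}}{\theta}\in\cob{F}{A}$.

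No step is really the main obstacle; the argument is routine once one recognises that the question reduces to showing $S$ is a subgroup. The only mild subtlety is the product step, where one must use the correct grouping in the composition formula so that the additive identity $\theta-\act{\alpha\gamma}{\beta\delta}{\theta}=(\theta-\act{\alpha}{\beta}\theta)+\act{\alpha}{\beta}(\theta-\act{\gamma}{\delta}\theta)$ telescopes properly, and recall that the action respects the additive structure on $\coc{F}{A}$ (also noted explicitly in \S\ref{Sc:Action}).
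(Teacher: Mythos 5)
Your proof is correct and takes essentially the same route as the paper's: the heart of both arguments is the telescoping identity $\theta-\act{\alpha\gamma}{\beta\delta}{\theta}=(\theta-\act{\alpha}{\beta}{\theta})+\act{\alpha}{\beta}{(\theta-\act{\gamma}{\delta}{\theta})}$ combined with the invariance of $\cob{F}{A}$ under the action. The paper records only closure under products (which suffices because $\aut{F,A}$ is finite), while you additionally verify the identity and inverses explicitly; this is a harmless, slightly more complete version of the same argument.
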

\begin{proof}
Assume that $\theta\in\inv{\alpha,\beta}\cap\inv{\gamma,\delta}$. Then
$\theta-\act{\alpha}{\beta}{\theta}\in\cob{F}{A}$ and
$\theta-\act{\gamma}{\delta}{\theta}\in\cob{F}{A}$. The second equation is
equivalent to $\act{\alpha}{\beta}{\theta} -
\act{\alpha}{\beta}{(\act{\gamma}{\delta}{\theta})}\in\cob{F}{A}$. Adding this
to the first equation yields $\theta -
\act{\alpha}{\beta}{(\act{\gamma}{\delta}{\theta})} = \theta -
\act{\alpha\gamma}{\beta\delta}{\theta}\in\cob{F}{A}$.
\end{proof}

\begin{corollary}\label{Cr:InvHK}
Let $H$, $K\le \aut{F,A}$. Then $\inv{H}\cap\inv{K} = \inv{\langle H\cup
K\rangle}$.
\end{corollary}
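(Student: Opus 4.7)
The plan is to reduce the corollary directly to Lemma \ref{Lm:InvH}. Starting from the definition $\inv{H} = \bigcap_{(\alpha,\beta)\in H}\inv{\alpha,\beta}$, I would observe the purely set-theoretic identity
\begin{displaymath}
    \inv{H}\cap\inv{K} = \bigcap_{(\alpha,\beta)\in H}\inv{\alpha,\beta}
    \;\cap\; \bigcap_{(\alpha,\beta)\in K}\inv{\alpha,\beta}
    = \bigcap_{(\alpha,\beta)\in H\cup K}\inv{\alpha,\beta} = \inv{H\cup K},
\end{displaymath}
where the last equality is just the definition \eqref{Eq:InvH} applied to the nonempty subset $H\cup K\subseteq\aut{F,A}$.

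Then I would invoke Lemma \ref{Lm:InvH}, which was stated for arbitrary nonempty subsets (not only subgroups), to conclude $\inv{H\cup K}=\inv{\langle H\cup K\rangle}$. Chaining the two equalities yields the desired $\inv{H}\cap\inv{K}=\inv{\langle H\cup K\rangle}$.

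I expect no real obstacle here: the argument is a two-line formal consequence of the previous lemma. The only point worth flagging is that $H\cup K$ need not be a subgroup of $\aut{F,A}$, so it is important that Lemma \ref{Lm:InvH} was proved for all nonempty subsets; since the identity $(\id,\id)$ lies in both $H$ and $K$, the set $H\cup K$ is in any case nonempty, so the hypothesis of that lemma is satisfied.
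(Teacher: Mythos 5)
Your proof is correct and is precisely the argument the paper intends: the corollary is stated without proof as an immediate consequence of the definition \eqref{Eq:InvH} applied to the nonempty set $H\cup K$ together with Lemma \ref{Lm:InvH}, which was indeed formulated for arbitrary nonempty subsets. Your remark that $H\cup K$ need not be a subgroup, so that the generality of the lemma matters, is exactly the right point to flag.
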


For $\alpha$, $\gamma\in \aut{F}$ and $\beta$, $\delta\in\aut{A}$, let
$\conj{\gamma}{\alpha} = \gamma\alpha\gamma^{-1}$, $\conj{\delta}{\beta} =
\delta\beta\delta^{-1}$.

\begin{lemma}\label{Lm:Conj}
Let $(\alpha,\beta)$, $(\gamma,\delta)\in \aut{F,A}$. Then
$\theta\in\inv{\alpha,\beta}$ if and only if $\act{\gamma}{\delta}{\theta} \in
\inv{\conj{\gamma}{\alpha},\conj{\delta}{\beta}}$.
\end{lemma}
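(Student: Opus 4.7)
The plan is to reduce the equivalence to the observation that the action of $\aut{F,A}$ is a group action on $\coc{F}{A}$ which preserves $\cob{F}{A}$ bijectively. By definition, the right-hand side says that $\act{\gamma}{\delta}{\theta} - \act{\conj{\gamma}{\alpha}}{\conj{\delta}{\beta}}{(\act{\gamma}{\delta}{\theta})}$ lies in $\cob{F}{A}$, so I need to rewrite this difference in terms of $\theta - \act{\alpha}{\beta}{\theta}$.

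First I would compute the composed action using the identity $\act{\rho}{\sigma}{(\act{\gamma}{\delta}{\theta})} = \act{\rho\gamma}{\sigma\delta}{\theta}$ from \S\ref{Sc:Action}. Applied with $\rho=\conj{\gamma}{\alpha}=\gamma\alpha\gamma^{-1}$ and $\sigma=\conj{\delta}{\beta}=\delta\beta\delta^{-1}$, this gives
\begin{displaymath}
    \act{\conj{\gamma}{\alpha}}{\conj{\delta}{\beta}}{(\act{\gamma}{\delta}{\theta})}
    = \act{\gamma\alpha\gamma^{-1}\gamma}{\delta\beta\delta^{-1}\delta}{\theta}
    = \act{\gamma\alpha}{\delta\beta}{\theta}
    = \act{\gamma}{\delta}{(\act{\alpha}{\beta}{\theta})}.
\end{displaymath}

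Next I would use additivity of the action, $\act{\gamma}{\delta}{(\theta+\mu)} = \act{\gamma}{\delta}{\theta}+\act{\gamma}{\delta}{\mu}$, to conclude
\begin{displaymath}
    \act{\gamma}{\delta}{\theta} - \act{\conj{\gamma}{\alpha}}{\conj{\delta}{\beta}}{(\act{\gamma}{\delta}{\theta})}
    = \act{\gamma}{\delta}{\bigl(\theta - \act{\alpha}{\beta}{\theta}\bigr)}.
\end{displaymath}

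Finally, since $\act{\gamma}{\delta}{\widehat{\tau}} = \widehat{\delta\tau\gamma^{-1}}$, the action of $(\gamma,\delta)$ maps $\cob{F}{A}$ into itself; because $(\gamma^{-1},\delta^{-1})$ acts as its inverse, this restriction is a bijection. Therefore $\theta - \act{\alpha}{\beta}{\theta} \in \cob{F}{A}$ if and only if $\act{\gamma}{\delta}{(\theta - \act{\alpha}{\beta}{\theta})} \in \cob{F}{A}$, which is exactly the equivalence to be proved. There is no real obstacle here; the only thing worth double-checking is the bookkeeping with the conjugation inside the action formula, to make sure the $\gamma^{-1}\gamma$ cancellation is genuine and not an artifact of notation.
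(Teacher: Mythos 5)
Your proof is correct and follows essentially the same route as the paper's: both use the composition rule $\act{\rho\gamma}{\sigma\delta}{\theta}=\act{\rho}{\sigma}{(\act{\gamma}{\delta}{\theta})}$ to collapse $\act{\conj{\gamma}{\alpha}}{\conj{\delta}{\beta}}{(\act{\gamma}{\delta}{\theta})}$ into $\act{\gamma}{\delta}{(\act{\alpha}{\beta}{\theta})}$, then additivity and the fact that $(\gamma,\delta)$ acts bijectively on $\cob{F}{A}$. You are in fact slightly more explicit than the paper in justifying the last step.
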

\begin{proof}
The following conditions are equivalent:
\begin{align*}
    \act{\gamma}{\delta}{\theta}&\in \inv{\conj{\gamma}{\alpha},\conj{\delta}{\beta}},\\
    \act{\gamma}{\delta}{\theta} - \act{\conj{\gamma}{\alpha}}{\conj{\delta}{\beta}}{(\act{\gamma}{\delta}{\theta})}&\in \cob{F}{A},\\
    \act{\gamma}{\delta}{\theta} - \act{\gamma\alpha}{\delta\beta}{\theta}&\in \cob{F}{A},\\
    \act{\gamma}{\delta}{(\theta - \act{\alpha}{\beta}{\theta})} &\in \cob{F}{A},\\
    \theta - \act{\alpha}{\beta}{\theta}& \in \cob{F}{A},\\
    \theta &\in \inv{\alpha,\beta}.
\end{align*}
\end{proof}

For $H\le\aut{F,A}$, let
\begin{align*}
    \invs{H} &= \{\theta\in\coc{F}{A};\;\theta\in\inv{\alpha,\beta}\text{ if and only if }(\alpha,\beta)\in H\},\\
    \invsc{H} &= \bigcup_{(\alpha,\beta)\in \aut{F,A}} \invs{\conj{(\alpha,\beta)}{H}}.
\end{align*}
As we are going to see, the cardinality of the equivalence class
$[\theta]_\sim$ can be easily calculated for $\theta\in\invs{H}$, provided
$\theta$ is separable.

If $G$ is a group and $H\le G$, let $N_G(H)=\{a\in G;\;\conj{a}{H}=H\}$ be the
normalizer of $H$ in $G$.

\begin{lemma}\label{Lm:ConjSize}
Let $H\le G = \aut{F,A}$. Then $|\invsc{H}| = |\invs{H}|\cdot [G:N_G(H)]$.
\end{lemma}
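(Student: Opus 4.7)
The plan is to show two things: first, that $\invsc{H}$ decomposes as a disjoint union of $[G:N_G(H)]$ sets of the form $\invs{K}$ with $K$ conjugate to $H$; second, that each such $\invs{K}$ has the same cardinality as $\invs{H}$. The bookkeeping hinges on attaching to each cocycle $\theta\in\coc{F}{A}$ its stabilizer $S(\theta)=\{(\alpha,\beta)\in G;\;\theta\in\inv{\alpha,\beta}\}$, so that by the very definition $\invs{H}=\{\theta\in\coc{F}{A};\;S(\theta)=H\}$.

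I would first check that $S(\theta)$ is a subgroup of $G=\aut{F,A}$. It contains the identity, the computation inside the proof of Lemma \ref{Lm:InvH} shows it is closed under products, and closure under inverses is immediate since the $\aut{F,A}$-action preserves $\cob{F}{A}$: applying $\act{\alpha^{-1}}{\beta^{-1}}{}$ to $\theta-\act{\alpha}{\beta}{\theta}\in\cob{F}{A}$ yields $\act{\alpha^{-1}}{\beta^{-1}}{\theta}-\theta\in\cob{F}{A}$. Once $S(\theta)$ is recognized as a subgroup, the family $\{\invs{K};\;K\le G\}$ partitions $\coc{F}{A}$, so in particular $\invs{K}\cap\invs{K'}=\emptyset$ whenever $K\ne K'$.

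Next, Lemma \ref{Lm:Conj} rewritten in this language says that $(\alpha,\beta)\in S(\theta)$ iff $(\conj{\gamma}{\alpha},\conj{\delta}{\beta})\in S(\act{\gamma}{\delta}{\theta})$, i.e.\ $S(\act{\gamma}{\delta}{\theta})=\conj{(\gamma,\delta)}{S(\theta)}$. Hence the bijection $\theta\mapsto \act{\gamma}{\delta}{\theta}$ of $\coc{F}{A}$ restricts to a bijection $\invs{H}\to\invs{\conj{(\gamma,\delta)}{H}}$, giving $|\invs{\conj{(\gamma,\delta)}{H}}|=|\invs{H}|$ for every $(\gamma,\delta)\in G$.

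To finish, the number of distinct conjugates of $H$ is the size of the orbit of $H$ under the conjugation action of $G$ on its subgroups, which by the orbit-stabilizer theorem equals $[G:N_G(H)]$. Thus $\invsc{H}$ is a disjoint union of $[G:N_G(H)]$ sets, each of cardinality $|\invs{H}|$, and $|\invsc{H}|=|\invs{H}|\cdot[G:N_G(H)]$ follows. The only nontrivial point is the subgroup check for $S(\theta)$; once that is in place, the partition property and the orbit-stabilizer count do all the remaining work.
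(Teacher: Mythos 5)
Your proof is correct and follows essentially the same route as the paper: the paper also counts the conjugates of $H$ via $N_G(H)$, uses Lemma \ref{Lm:Conj} to show that $\theta\mapsto\act{\alpha}{\beta}{\theta}$ carries $\invs{H}$ bijectively onto $\invs{\conj{(\alpha,\beta)}{H}}$, and notes that distinct $\invs{K}$ are disjoint by definition. Your stabilizer bookkeeping (including the subgroup check for $S(\theta)$, which is not actually needed for disjointness) is just a more explicit phrasing of the same argument.
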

\begin{proof}
Since $\conj{a}{H} = \conj{b}{H}$ if and only if $a^{-1}b\in N_G(H)$, there are
precisely $[G:N_G(H)]$ subgroups $K$ of $G$ conjugate to $H$.

Assume that $K\ne H$ are conjugate, $K=\conj{(\alpha,\beta)}{H}$. The mapping
$f:\coc{F}{A}\to \coc{F}{A}$, $\theta\mapsto\act{\alpha}{\beta}{\theta}$ is a
bijection. By Lemma \ref{Lm:Conj}, $f(\inv{H}) = \inv{K}$, and $f(\invs{H}) =
\invs{K}$, proving $|\invs{H}| = |\invs{K}|$. Since $K\ne H$, we have
$\invs{H}\cap\invs{K}=\emptyset$ by definition.
\end{proof}

For a group $G$, denote by $\mathrm{Sub}_c(G)$ a set of subgroups of $G$ such
that for every $H\le G$ there is precisely one $K\in\mathrm{Sub}_c(G)$ such
that $K$ is conjugate to $H$.

\begin{theorem}\label{Th:Main}
Let $F$ be a loop and $A$ an abelian group. Assume that $\theta$ is separable
for every $\theta\in\coc{F}{A}$. Let $G = \aut{F,A}$. Then there are
\begin{equation}\label{Eq:Main}
    \sum_{H\in\mathrm{Sub}_c(G)}
        \frac{ |\invsc{H}| }{ |\cob{F}{A}|\cdot [G:H] }
    = \sum_{H\in\mathrm{Sub}_c(G)}
        \frac{ |\invs{H}|}{ |\cob{F}{A}|\cdot [N_G(H):H] }
\end{equation}
central extensions of $A$ by $F$, up to isomorphism.
\end{theorem}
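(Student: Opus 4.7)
The plan is to count $\sim$-equivalence classes in $\coc{F}{A}$, since by the separability hypothesis these classes are in bijection with the isomorphism classes of central extensions of $A$ by $F$. The identity $\act{\alpha}{\beta}{\widehat{\tau}}=\widehat{\beta\tau\alpha^{-1}}$ from \S\ref{Sc:Action} shows that the action of $G=\aut{F,A}$ on $\coc{F}{A}$ preserves $\cob{F}{A}$ and therefore descends to $\coh{F}{A}$. Unwinding \eqref{Eq:Inv} and \eqref{Eq:InvH}, $\invs{H}$ is exactly the preimage in $\coc{F}{A}$ of the set of cosets $\theta+\cob{F}{A}\in\coh{F}{A}$ whose $G$-stabilizer equals $H$; in particular, $\invs{H}$ is a union of full cosets of $\cob{F}{A}$.

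Next I would apply orbit-stabilizer to this induced action. For $\theta\in\invs{H}$ the orbit of $\theta+\cob{F}{A}$ has size $[G:H]$, so the $\sim$-equivalence class
\begin{displaymath}
    [\theta]_\sim=\bigcup_{(\alpha,\beta)\in G}(\act{\alpha}{\beta}{\theta}+\cob{F}{A})
\end{displaymath}
is a union of exactly $[G:H]$ cosets of $\cob{F}{A}$ and has cardinality $[G:H]\cdot|\cob{F}{A}|$. By Lemma \ref{Lm:Conj}, $\act{\alpha}{\beta}{\invs{H}}=\invs{\conj{(\alpha,\beta)}{H}}$, so $[\theta]_\sim\subseteq\invsc{H}$. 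Conversely, every $\theta'\in\invsc{H}$ lies in some $\invs{K}$ with $K$ conjugate to $H$, hence in the $\sim$-class of an element of $\invs{H}$ obtained by applying a suitable $(\alpha,\beta)^{-1}$. Thus $\invsc{H}$ is a disjoint union of $\sim$-classes, each of size $[G:H]\cdot|\cob{F}{A}|$, and so contains exactly $|\invsc{H}|/(|\cob{F}{A}|\cdot[G:H])$ classes.

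To conclude, every cocycle belongs to $\invs{K}$ for a unique $K\le G$, so $\coc{F}{A}=\bigsqcup_{H\in\mathrm{Sub}_c(G)}\invsc{H}$. Summing over $\mathrm{Sub}_c(G)$ yields the first equality in \eqref{Eq:Main}. The second equality follows by substituting $|\invsc{H}|=|\invs{H}|\cdot[G:N_G(H)]$ from Lemma \ref{Lm:ConjSize} and using $[G:H]=[G:N_G(H)]\cdot[N_G(H):H]$.

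I do not anticipate a genuine obstacle: separability, compatibility of the $G$-action with $\cob{F}{A}$, closure of $\invs{H}$ under coboundary translation, and Lemma \ref{Lm:ConjSize} are all already in hand, and the proof reduces to orbit counting once the stabilizer interpretation of $\invs{H}$ in $\coh{F}{A}$ is recognized.
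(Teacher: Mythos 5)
Your proposal is correct and follows essentially the same route as the paper: partition $\coc{F}{A}$ into the sets $\invsc{H}$, use separability to identify isomorphism classes with $\sim$-classes, show each class in $\invsc{H}$ is a union of exactly $[G:H]$ cosets of $\cob{F}{A}$, and invoke Lemma \ref{Lm:ConjSize} for the second equality. The only cosmetic difference is that you package the coset count as orbit--stabilizer for the induced action on $\coh{F}{A}$, whereas the paper verifies directly that $\act{\alpha}{\beta}{\theta}-\act{\gamma}{\delta}{\theta}\in\cob{F}{A}$ iff $(\alpha,\beta)K=(\gamma,\delta)K$ --- the same computation in different clothing.
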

\begin{proof}
By Lemma \ref{Lm:InvH},
\begin{displaymath}
    \coc{F}{A} = \bigcup_{H\le G}\invs{H}
    = \bigcup_{H\in\mathrm{Sub}_c(G)}\invsc{H},
\end{displaymath}
where the unions are disjoint. Let $\theta\in\invsc{H}$, for some $H\le G$. Let
$X=\{\mu\in\coc{F}{A};\;\ext{F}{A}{\mu}\cong\ext{F}{A}{\theta}\}$. Since
$\theta$ is separable, we have
\begin{displaymath}
    X = [\theta]_\sim = \bigcup_{(\alpha,\beta)\in G}
    (\act{\alpha}{\beta}{\theta}+\cob{F}{A}) \subseteq \invsc{H},
\end{displaymath}
where the first equality follows by separability of $\theta$, and the inclusion
from Lemma \ref{Lm:Conj}.

Let $K$ be the unique conjugate of $H$ such that $\theta\in\invs{K}$. We have
$\act{\alpha}{\beta}{\theta} - \act{\gamma}{\delta}{\theta}\in\cob{F}{A}$ if
and only if $\theta -
\act{\alpha^{-1}\gamma}{\beta^{-1}\delta}{\theta}\in\cob{F}{A}$, which holds if
and only if $\theta\in\inv{\alpha^{-1}\gamma,\beta^{-1}\delta}$. Since
$\theta\in\invs{K}$, we see that $\act{\alpha}{\beta}{\theta} -
\act{\gamma}{\delta}{\theta}\in\cob{F}{A}$ holds if and only if
$(\alpha^{-1}\gamma,\beta^{-1}\delta)\in K$, or
$(\alpha,\beta)K=(\gamma,\delta)K$. Hence $[\theta]_\sim$ is a union of $[G:K]
= [G:H]$ cosets of $\cob{F}{A}$. We have established the first sum of
\eqref{Eq:Main}. The second sum then follows from Lemma \ref{Lm:ConjSize}.
\end{proof}

\section{Calculating the subspaces $\inv{\alpha,\beta}$ by
computer}\label{Sc:Comp}

Assume throughout this section that $A=\mathbb Z_p$, where $p$ is a prime. Then
$\coc{F}{A}$, $\cob{F}{A}$ and $\coh{F}{A}$ are vector spaces over $\gf{p}$.

For $(\alpha,\beta)\in\aut{F,A}$ let $R=R(\alpha,\beta)$, $S=S(\alpha,\beta)$
be the linear operators $\coc{F}{A}\to\coc{F}{A}$ defined by
\begin{align*}
    R(\alpha,\beta)\theta &= \act{\alpha}{\beta}{\theta},\\
    S(\alpha,\beta)\theta &= \theta - \act{\alpha}{\beta}{\theta}.
\end{align*}
Hence $R(\alpha,\beta)$ is invertible, and $S(\alpha,\beta) =
I-R(\alpha,\beta)$, where $I:\coc{F}{A}\to\coc{F}{A}$ is the identity operator.

As $\beta\in\aut{\mathbb Z_p}$ is a scalar multiplication by $\beta(1)$, let us
identify $\beta$ with $\beta(1)$. Then $R(\alpha,\beta)$ is a matrix operator
with rows and columns labeled by pairs of nonidentity elements of $F$, where
the only nonzero coefficient in row $(x,y)$ is $-\beta$ in column $(\alpha^{-1}
x,\alpha^{-1} y)$.

By definition of $\inv{\alpha,\beta}$ and $S(\alpha,\beta)$, we have
\begin{displaymath}
    \inv{\alpha,\beta} = \{\theta\in\coc{F}{A};\;S(\alpha,\beta)\theta
    \in\cob{F}{A}\} = S(\alpha,\beta)^{-1}\cob{F}{A}.
\end{displaymath}
In order to calculate $\inv{\alpha,\beta}$, we can proceed as follows:
\begin{enumerate}
\item[$\bullet$] calculate the subspace $\cob{F}{A}$ as the span of
    $\{\widehat{\tau_c};\;1\ne c\in F\}$,
\item[$\bullet$] calculate the kernel $\ker{S(\alpha,\beta)}$ and image
    $\im{S(\alpha,\beta)}$ as usual,
\item[$\bullet$] find a basis $\mathcal B$ of the subspace
    $\cob{F}{A}\cap\im{S(\alpha,\beta)}$,
\item[$\bullet$] for $b\in\mathcal B$, find a particular solution
    $\theta_b$ to the system $S(\alpha,\beta)\theta_b = b$,
\item[$\bullet$] then $\inv{\alpha,\beta} = \ker{S(\alpha,\beta)} \oplus
    \langle \theta_b;\;b\in\mathcal B\rangle$.
\end{enumerate}
In particular, with $S=S(\alpha,\beta)$, we have
\begin{align*}
    \dim\inv{\alpha,\beta} &= \dim\ker{S}
    + \dim( \im{S}\cap\cob{F}{A})\\
    &=\dim\ker{S} + \dim\im{S} + \dim\cob{F}{A}
    - \dim(\im{S}+\cob{F}{A})\\
    &=(|F|-1)^2 + \dim\cob{F}{A} - \dim(\im{S}+\cob{F}{A}).
\end{align*}

Using a computer, it is therefore not difficult to find $\inv{\alpha,\beta}$
and its dimension even for rather large loops $A=\mathbb Z_p$ and $F$. See
\S\ref{Sc:Enumeration} for more details.

\begin{remark}
If it is preferable to operate modulo coboundaries, note that
$S(\alpha,\beta)(\theta + \widehat{\tau}) = S(\alpha,\beta)\theta +
\widehat{\tau - \beta\tau\alpha^{-1}}$, and view $S(\alpha,\beta)$ as a linear
operator $S(\alpha,\beta):\coh{F}{A}\to\coh{F}{A}$ defined by
\begin{displaymath}
    S(\alpha,\beta)(\theta + \cob{F}{A}) = (\theta -
    \act{\alpha}{\beta}{\theta}) + \cob{F}{A}.
\end{displaymath}
Then $\inv{\alpha,\beta}/\cob{F}{A} = \ker{S(\alpha,\beta)}$.
\end{remark}

\section{The subspaces $\inv{\alpha,\beta}$ for $A=\mathbb Z_p$, $F=\mathbb
Z_q$}\label{Sc:pq}

If $H\le K\le\aut{F,A}$, we have $\inv{K}\le\inv{H}$. Hence the subgroups
$\inv{H}$ will be incident in accordance with the upside down subgroup lattice
of $\aut{F,A}$, except that some edges in the lattice can collapse, i.e., it
can happen that $\inv{H} = \inv{K}$ although $H<K$:

\begin{example}\label{Ex:Collapse}
Let $A=\mathbb Z_2$, $F=\mathbb Z_2\times\mathbb Z_2$. Then $\aut{F}\cong S_3$.
Let $H$ be the subgroup of $\aut{F}$ generated by a $3$-cycle. Then it turns
out that $\mathrm{Inv}(H) = \mathrm{Inv}(\aut{F})$.
\end{example}

Such a collapse has no impact on the formula \eqref{Eq:Main} of Theorem
\ref{Th:Main}, since only subgroups $H$ with $\mathrm{Inv}^*(H)\ne\emptyset$
contribute to it.

We proceed to determine $\dim\inv{\alpha,\beta}$.

In addition to the operators $R(\alpha,\beta)$ and $S(\alpha,\beta)$ on
$\coc{F}{A}$, define $T(\alpha,\beta)$ by
\begin{displaymath}
    T(\alpha,\beta)\theta = \theta + R(\alpha,\beta)\theta + \cdots + R(\alpha,\beta)^{k-1}\theta,
\end{displaymath}
where $k=|\alpha|$.

\begin{lemma}\label{Lm:Operators}
Let $R$, $S$, $T$ be operators on a finite-dimensional vector space $V$ such
that $R^k=I$, $S=I-R$, $T=I+R+\cdots+R^{k-1}$. Then $\im{T}\le\ker{S}$ and
$\im{S}\le\ker{T}$. If $\im{T}=\ker{S}$ then $\ker{T}=\im{S}$.
\end{lemma}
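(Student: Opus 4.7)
The plan is to verify the two inclusions by a direct algebraic computation with the operators, and then deduce the third claim from a dimension count via the rank--nullity theorem.

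First, I would compute the composition $ST$. Using the identities $S = I - R$ and $T = I + R + \cdots + R^{k-1}$, the product telescopes:
\begin{displaymath}
    ST = (I - R)(I + R + \cdots + R^{k-1}) = I - R^k = 0,
\end{displaymath}
where the last equality uses $R^k = I$. This gives $\im T \le \ker S$ at once. The same calculation with the factors reversed gives $TS = 0$, hence $\im S \le \ker T$.

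For the third claim, suppose $\im T = \ker S$. Applying rank--nullity to $S$ and $T$ on $V$ yields
\begin{displaymath}
    \dim \ker S + \dim \im S = \dim V = \dim \ker T + \dim \im T.
\end{displaymath}
The hypothesis gives $\dim \im T = \dim \ker S$, so after cancellation $\dim \im S = \dim \ker T$. Combined with the already established inclusion $\im S \le \ker T$, equality of dimensions forces $\im S = \ker T$.

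There is no real obstacle here: the whole statement reduces to the geometric series identity $(I-R)(I+R+\cdots+R^{k-1}) = I - R^k$ together with a single application of rank--nullity, and the only thing to be careful about is citing finite-dimensionality of $V$ when invoking the dimension formula.
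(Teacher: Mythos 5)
Your proposal is correct and follows essentially the same route as the paper: the telescoping identity $ST=TS=I-R^k=0$ for the two inclusions, followed by rank--nullity applied to both $S$ and $T$ and the cancellation $\dim\ker{S}=\dim\im{T}$ to upgrade the inclusion $\im{S}\le\ker{T}$ to an equality. No gaps; the remark about needing finite-dimensionality is exactly the right caveat.
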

\begin{proof}
We have $TS = (I+R+\cdots +R^{k-1})(I-R) =  I-R^k= 0$ and $ST =
(I-R)(I+R+\cdots +R^{k-1}) = 0$, which shows $\im{T}\le\ker{S}$,
$\im{S}\le\ker{T}$.

Assume that $\im{T}=\ker{S}$. By the Fundamental Homomorphism Theorem,
\begin{displaymath}
    \dim\im{T}+\dim\ker{T} = \dim V = \dim\im{S}+\dim\ker{S} =
    \dim\im{S}+\dim\im{T},
\end{displaymath}
so $\dim\ker{T}=\dim\im{S}$. Since $\im{S}\le\ker{T}$, we conclude that
$\im{S}=\ker{T}$.
\end{proof}

\begin{lemma}\label{Lm:Null}
Let $p$, $q$ be primes, $A=\mathbb Z_p$, $F=\mathbb Z_q$, $\alpha\in\aut{F}$,
$\beta\in\aut{A}$.
\begin{enumerate}
\item[(i)] If $|\beta|$ does not divide $|\alpha|$ then $S(\alpha,\beta)$
    is invertible.
\item[(ii)] If $|\beta|$ divides $|\alpha|$ then $\ker{S(\alpha,\beta)} =
    \im{T(\alpha,\beta)}$ and $\dim\ker{S(\alpha,\beta)} =
    (q-1)^2/|\alpha|$.
\end{enumerate}
\end{lemma}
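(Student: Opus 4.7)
The plan is to exploit the fact that $R(\alpha,\beta)^k$ acts on $\coc{F}{A}$ as the scalar $\beta^k$, and then to decompose $\coc{F}{A}$ into $R$-invariant blocks indexed by the orbits of $\alpha\times\alpha$ on $(F\setminus\{0\})^2$. Part (i) follows almost immediately: from $R^k\theta(x,y)=\beta^k\theta(x,y)$ and the assumption $S\theta=0$ we get $\theta = R\theta = R^k\theta = \beta^k\theta$; when $|\beta|\nmid k$ the scalar $\beta^k$ is a nontrivial element of $\gf{p}^*$, forcing $\theta=0$, so $S(\alpha,\beta)$ is injective and hence invertible.

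For part (ii) I would first check that $\alpha$, seen as multiplication by an element $a\in\mathbb Z_q^*$ of order $k$, acts freely on $F\setminus\{0\}$: the condition $\alpha^i x = x$ is equivalent to $a^i=1$. Consequently $\alpha\times\alpha$ acts freely on $(F\setminus\{0\})^2$, producing exactly $(q-1)^2/k$ orbits, each of size $k$. For a representative $(x,y)$ of an orbit $O$, let $V_O$ be the subspace of cocycles supported on $O$; in the basis $v_j = \theta(\alpha^j x,\alpha^j y)$, the operator $R$ restricts to the $\beta$-twisted cyclic shift $(v_0,\dots,v_{k-1})\mapsto(\beta v_{k-1},\beta v_0,\dots,\beta v_{k-2})$. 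Solving $Rv=v$ block by block gives the recursion $v_j=\beta^j v_0$, which closes up correctly because $\beta^k=1$ by hypothesis, so $\ker S\cap V_O$ is one-dimensional; summing over orbits yields $\dim\ker S(\alpha,\beta)=(q-1)^2/|\alpha|$.

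The remaining equality $\ker S = \im T$ I would then obtain orbit by orbit. The inclusion $\im T\subseteq\ker S$ is Lemma \ref{Lm:Operators}. For the converse, applying $T$ to the cocycle with $v_0=1$ and all other $v_j=0$ on $V_O$ produces the element of $V_O$ with $j$th entry $\beta^j$, which generates the line $\ker S\cap V_O$ computed above. Thus $\im T\cap V_O$ already equals $\ker S\cap V_O$, and taking direct sums gives the global statement. No step looks genuinely hard; the only real care lies in writing down the shift matrix of $R$ on each $V_O$ correctly (in particular not swapping $\alpha$ for $\alpha^{-1}$ in the direction of the shift) and in making sure the orbit computation on $(F\setminus\{0\})^2$ genuinely uses the freeness of the $\alpha$-action, which is what ultimately produces the clean denominator $|\alpha|$.
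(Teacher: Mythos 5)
Your proposal is correct and follows essentially the same route as the paper: both arguments rest on the observation that $R^k$ acts as the scalar $\beta^k$ (giving (i)), on the decomposition of $\coc{F}{A}$ into the free $\alpha$-orbits of size $k=|\alpha|$ on $(F\setminus\{0\})^2$ (giving the dimension count $(q-1)^2/k$), and on Lemma \ref{Lm:Operators} for $\im{T}\le\ker{S}$, with the reverse inclusion obtained by applying $T$ to cocycles supported at orbit representatives. The only cosmetic difference is that you phrase the per-orbit computation as a $\beta$-twisted cyclic shift, while the paper writes $\theta=T(\sum_i\theta_i)$ directly; the content is identical.
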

\begin{proof}
Let $F^*=F\setminus\{0\}$, $k=|\alpha|$. The automorphism $\alpha$ acts on
$F^*\times F^*$ via $(x,y)^\alpha = (\alpha^{-1}x, \alpha^{-1}y)$. Every
$\alpha$-orbit has size $k$. Let $t=(q-1)^2/k$, and let $\mathcal O_1$,
$\dots$, $\mathcal O_t$ be all the distinct $\alpha$-orbits on $F^*\times F^*$.

Let $R=R(\alpha,\beta)$, $S=S(\alpha,\beta)$, $T=T(\alpha,\beta)$. Throughout
the proof, let $\theta\in\ker{S}$, i.e.,
\begin{equation}\label{Eq:InKernel}
    \theta(x,y) = \beta\theta(\alpha^{-1}x,\alpha^{-1}y)
\end{equation}
for every $x$, $y\in F^*$. For every $1\le i\le t$, let $(x_i,y_i)\in\mathcal
O_i$. Define $\theta_i\in\coc{F}{A}$ by
\begin{displaymath}
    \theta_i(x,y) = \left\{\begin{array}{ll}
        \theta(x_i,y_i),&\text{ if $(x,y)=(x_i,y_i)$,}\\
        0,&\text{ otherwise.}
    \end{array}\right.
\end{displaymath}
Then
\begin{displaymath}
    \theta = \sum_{i=1}^t T\theta_i = T(\sum_{i=1}^t \theta_i),
\end{displaymath}
thus $\ker{S}\le\im{T}$.

The condition \eqref{Eq:InKernel} implies $\theta(x,y) = \beta^k\theta(x,y)$
for every $x$, $y\in F^*$. If $|\beta|$ does not divide $|\alpha|$, we have
$\beta^k\ne 1$, and therefore $\theta=0$, proving $\ker{S}=0$.

Assume that $|\beta|$ divides $|\alpha|$. Then $R^k=I$, and $\im{T}\le\ker{S}$
by Lemma \ref{Lm:Operators}. Thus $\im{T}=\ker{S}$ and $\ker{T}=\im{S}$. Since
$\theta$ is determined by the values $\theta(x_i,y_i)$, for $1\le i\le t$, and
since these values can be arbitrary, we see that $\dim{\ker{S}} = t$.
\end{proof}

\begin{lemma}\label{Lm:D}
Let $p$, $q$ be distinct primes, $A=\mathbb Z_p$, $F=\mathbb Z_q$,
$\alpha\in\aut{F}$, $\beta\in\aut{A}$, and assume that $|\beta|$ divides
$|\alpha|$. Then
\begin{displaymath}
    \dim{(\ker{T(\alpha,\beta)}\cap \cob{F}{A})} = (q-1)\left(1-\frac{1}{|\alpha|}\right).
\end{displaymath}
\end{lemma}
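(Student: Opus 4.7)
The plan is to identify an $R(\alpha,\beta)$-invariant decomposition of $\cob{F}{A}$ indexed by the $\alpha$-orbits on $F^*=F\setminus\{1\}$, compute $\dim\ker{T(\alpha,\beta)}$ on each summand directly, and sum the contributions.

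First I would establish the setup. Since $p\ne q$ are distinct primes, $\hom{F}{A}=0$, so $\{\widehat{\tau_c};\;c\in F^*\}$ is a basis of $\cob{F}{A}$ and $\dim\cob{F}{A}=q-1$. A short calculation from $\act{\alpha}{\beta}{\widehat{\tau}}=\widehat{\beta\tau\alpha^{-1}}$ together with $\tau_c\alpha^{-1}=\tau_{\alpha c}$ gives $R(\alpha,\beta)\widehat{\tau_c}=\beta\widehat{\tau_{\alpha c}}$, so $R$ permutes this basis (up to the scalar $\beta$) according to the $\alpha$-action on $F^*$; in particular $\cob{F}{A}$ is $R$- and hence $T$-invariant. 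Writing $k=|\alpha|$, the automorphism $\alpha$ acts on $F^*=\mathbb Z_q^*$ as multiplication by an element of order $k$, so $\langle\alpha\rangle$ acts freely on $F^*$, producing $t=(q-1)/k$ orbits, each of size $k$. For the $i$-th orbit $\mathcal O_i$ with representative $c_i$, let $W_i$ be spanned by $\{\widehat{\tau_c};\;c\in\mathcal O_i\}$; then $\cob{F}{A}=\bigoplus_{i=1}^{t} W_i$ decomposes as a direct sum of $R$-invariant (hence $T$-invariant) subspaces, each of dimension $k$.

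Next I would compute the kernel of $T$ on a single block. Setting $v_j=\widehat{\tau_{\alpha^j c_i}}$, the action reads $Rv_j=\beta v_{j+1}$ with indices modulo $k$, so $R^j v_\ell=\beta^j v_{\ell+j}$. Because $|\beta|$ divides $k$ we have $\beta^k=1$, so the reindexing
\begin{displaymath}
Tv_\ell=\sum_{j=0}^{k-1}\beta^j v_{\ell+j}=\beta^{-\ell}\sum_{m=0}^{k-1}\beta^m v_m=\beta^{-\ell}w
\end{displaymath}
shows that the image of $T|_{W_i}$ is spanned by the single nonzero vector $w=\sum_m\beta^m v_m$. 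Hence $\dim\im{T|_{W_i}}=1$ and, by rank-nullity, $\dim\ker{T|_{W_i}}=k-1$. Summing over the $t=(q-1)/k$ blocks,
\begin{displaymath}
\dim(\ker{T(\alpha,\beta)}\cap\cob{F}{A})=t(k-1)=(q-1)\left(1-\frac{1}{|\alpha|}\right).
\end{displaymath}

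The only real obstacle is spotting the right basis-and-orbit decomposition; once $\cob{F}{A}$ is split into the $W_i$ and the twisted-cyclic-shift form of $R$ on each $W_i$ is made explicit, the collapse of $T|_{W_i}$ to rank one is immediate. The hypothesis $|\beta|\mid|\alpha|$ enters precisely at this point, guaranteeing $\beta^k=1$ and thus the consistency of the reindexing that produces the common factor $\beta^{-\ell}w$.
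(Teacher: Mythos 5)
Your proof is correct and takes essentially the same route as the paper's: both arguments rest on the linear independence of $\{\widehat{\tau_c};\;c\in F^*\}$ (from $p\ne q$), the identity $R(\alpha,\beta)\widehat{\tau_c}=\beta\widehat{\tau_{\alpha c}}$, and the count of $(q-1)/|\alpha|$ orbits of $\alpha$ on $F^*$. The paper packages the computation as a linear system in the coefficients $\lambda_c$ and counts one independent equation per orbit, while you compute $\dim\im{T}$ blockwise on the $R$-invariant summands; these are the same rank calculation in dual form, and your use of $\beta^{k}=1$ in the reindexing is exactly where the paper's hypothesis that $|\beta|$ divides $|\alpha|$ enters as well.
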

\begin{proof}
The set $\{\widehat{\tau_c};\;c\in F^*\}$ is linearly independent thanks to
$p\ne q$. Let
\begin{displaymath}
    \widehat{\tau} = \sum_{c\in F^*}\lambda_c\widehat{\tau_c},
\end{displaymath}
for some $\lambda_c\in A$. An inspection of \eqref{Eq:TauHat} reveals that
\begin{equation}\label{Eq:TauHatAlpha}
    R(\alpha,\beta)\widehat{\tau_c} = \beta\widehat{\tau_{\alpha c}}.
\end{equation}
Thus $\widehat{\tau}$ belongs to $\ker{T(\alpha,\beta)}\cap\cob{F}{A}$ if and
only if
\begin{equation}\label{Eq:System2}
    \sum_c \lambda_c\widehat{\tau_c} +
    \beta \sum_c \lambda_c\widehat{\tau_{\alpha c}} +
    \cdots
    + \beta^{k-1}\sum_c\lambda_c\widehat{\tau_{\alpha^{(k-1)}c}} = 0.
\end{equation}
The coefficient of $\widehat{\tau_c}$ in \eqref{Eq:System2} is
$\lambda_c+\beta\lambda_{\alpha^{-1} c} + \cdots +
\beta^{k-1}\lambda_{\alpha^{-(k-1)}c}$, so the system \eqref{Eq:System2} can be
rewritten in terms of the coefficients $\lambda_c$ as
\begin{equation}\label{Eq:System3}
    \lambda_c+\beta\lambda_{\alpha^{-1} c} + \cdots +
    \beta^{k-1}\lambda_{\alpha^{-(k-1)}c} = 0, \text{ for $c\in F^*$.}
\end{equation}
For any $1\le i\le k$, the equation for $c$ is a scalar multiple of the
equation for $\alpha^ic$. On the other hand, each equation involves scalars $c$
from only one orbit of $\alpha$. Hence \eqref{Eq:System3} reduces to a system
of $(q-1)/|\alpha|$ linearly independent equations in variables $\lambda_c$,
$c\in F^*$. It follows that the subspace of homogeneous solutions has dimension
$(q-1)(1-1/|\alpha|)$.
\end{proof}

\begin{theorem}\label{Th:pq}
Let $p\ne q$ be primes, $A=\mathbb Z_p$, $F=\mathbb Z_q$, $\alpha\in\aut{F}$,
$\beta\in\aut{A}$. Then
\begin{displaymath}
    \inv{\alpha,\beta} = \ker{S(\alpha,\beta)} + \cob{F}{A}.
\end{displaymath}
Moreover,
\begin{displaymath}
    \dim(\inv{\alpha,   \beta}) = \left\{\begin{array}{ll}
        q-1,&\text{ if $|\beta|$ does not divide $|\alpha|$},\\
        (q-1) + (q-1)(q-2)/|\alpha|,&\text{ otherwise.}
    \end{array}\right.
\end{displaymath}
Thus
\begin{displaymath}
    \dim(\inv{\alpha,\beta}/\cob{F}{A}) = \left\{\begin{array}{ll}
        0,&\text{ if $|\beta|$ does not divide $|\alpha|$},\\
        (q-1)(q-2)/|\alpha|,&\text{ otherwise.}
    \end{array}\right.
\end{displaymath}
\end{theorem}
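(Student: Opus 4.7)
The plan is to prove the set-theoretic identity $\inv{\alpha,\beta} = \ker S + \cob{F}{A}$ directly, and then extract both dimension formulas from it. Write $k = |\alpha|$, $R = R(\alpha,\beta)$, $S = S(\alpha,\beta) = I - R$, and $T = T(\alpha,\beta)$. The inclusion $\ker S + \cob{F}{A} \subseteq \inv{\alpha,\beta}$ is immediate: $\ker S \subseteq \inv{\alpha,\beta}$ by definition, and $\cob{F}{A}$ is $R$-invariant because $R\widehat{\tau} = \widehat{\beta\tau\alpha^{-1}}$, hence $S$-invariant, so $\cob{F}{A} \subseteq \inv{\alpha,\beta}$ as well.

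\textbf{First case: $|\beta| \nmid |\alpha|$.} Here $S$ is invertible by Lemma \ref{Lm:Null}(i). Since $\cob{F}{A}$ is a finite-dimensional $S$-invariant subspace, $S$ restricts to a bijection on it, giving $S^{-1}(\cob{F}{A}) = \cob{F}{A}$. Therefore $\inv{\alpha,\beta} = \cob{F}{A} = \ker S + \cob{F}{A}$ (since $\ker S = 0$), with dimension $\dim\cob{F}{A} = q - 1$; the linear independence of $\{\widehat{\tau_c}\}_{c \in F^*}$ used in the proof of Lemma \ref{Lm:D} (valid because $p \ne q$) supplies this dimension.

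\textbf{Second case: $|\beta| \mid |\alpha|$, reverse inclusion.} Given $\theta \in \inv{\alpha,\beta}$, write $S\theta = \widehat{\tau}$ with $\tau = \sum_{c \in F^*}\lambda_c \tau_c$. The strategy is to produce a $\widehat{\sigma} \in \cob{F}{A}$ with $S\widehat{\sigma} = \widehat{\tau}$, for then $\theta - \widehat{\sigma} \in \ker S$ and $\theta = (\theta-\widehat{\sigma}) + \widehat{\sigma}$ is of the required form. Because $R^k = I$ in this case, Lemma \ref{Lm:Operators} gives $TS = 0$, so $T\widehat{\tau} = TS\theta = 0$. Unpacking this via \eqref{Eq:TauHatAlpha} and the linear independence of the $\widehat{\tau_c}$ yields
\begin{displaymath}
\sum_{i=0}^{k-1}\beta^i\lambda_{\alpha^{-i}c} = 0 \quad \text{for every } c \in F^*.
\end{displaymath}
Looking for $\sigma = \sum_c \mu_c \tau_c$ with $S\widehat{\sigma} = \widehat{\sigma - \beta\sigma\alpha^{-1}} = \widehat{\tau}$ reduces to the linear recursion $\mu_c - \beta\mu_{\alpha^{-1}c} = \lambda_c$, whose closure condition around any $\alpha$-orbit of length $k$ reads $(1-\beta^k)\mu_c = \sum_{i=0}^{k-1}\beta^i\lambda_{\alpha^{-i}c}$. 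Both sides vanish (the left by $\beta^k = 1$, the right by the displayed identity), so the recursion is consistent: choose $\mu_c$ freely on one representative of each $\alpha$-orbit and propagate, producing the desired $\sigma$.

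\textbf{Dimension count.} From $\inv{\alpha,\beta} = S^{-1}(\cob{F}{A})$ one reads off $\dim\inv{\alpha,\beta} = \dim\ker S + \dim(\im S \cap \cob{F}{A})$. When $|\beta| \mid |\alpha|$, Lemma \ref{Lm:Null}(ii) supplies $\dim\ker S = (q-1)^2/k$ together with $\im S = \ker T$, and Lemma \ref{Lm:D} gives $\dim(\ker T \cap \cob{F}{A}) = (q-1)(1-1/k)$; combining and simplifying yields $(q-1) + (q-1)(q-2)/k$. The companion formula for $\dim(\inv{\alpha,\beta}/\cob{F}{A})$ then follows by subtracting $\dim\cob{F}{A} = q-1$. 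The main obstacle is the reverse inclusion in the second case: the crux is noticing that $TS = 0$, together with $\beta^k = 1$, is precisely the compatibility needed to close the orbit-by-orbit recursion for $\sigma$, and without either ingredient the recursion would fail.
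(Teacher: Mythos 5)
Your proof is correct, and while it shares the overall skeleton with the paper's argument (the case split on whether $|\beta|$ divides $|\alpha|$, the identity $\dim S^{-1}(\cob{F}{A}) = \dim\ker{S} + \dim(\im{S}\cap\cob{F}{A})$, and the appeal to Lemmas \ref{Lm:Operators}, \ref{Lm:Null} and \ref{Lm:D} for the dimension count), it handles the one genuinely delicate step --- the inclusion $\inv{\alpha,\beta}\subseteq\ker{S}+\cob{F}{A}$ when $|\beta|$ divides $|\alpha|$ --- by a different route. The paper proves the equivalent statement $\im{S}\cap\cob{F}{A}=S(\cob{F}{A})$ indirectly, by exhibiting the explicit linearly independent family $\{R^\ell\widehat{\tau_{c_i}}-R^{\ell+1}\widehat{\tau_{c_i}}\}$ of size $(q-1)(1-1/k)$ inside $S(\cob{F}{A})\cap\cob{F}{A}$ and matching it against the upper bound from Lemma \ref{Lm:D}. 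You instead argue constructively: given $S\theta=\widehat{\tau}$, the relation $T\widehat{\tau}=TS\theta=0$ translates (via \eqref{Eq:TauHatAlpha} and the independence of the $\widehat{\tau_c}$, both needing $p\ne q$) into exactly the orbit sums $\sum_{i=0}^{k-1}\beta^i\lambda_{\alpha^{-i}c}=0$, and these are precisely the compatibility conditions needed to solve the recursion $\mu_c-\beta\mu_{\alpha^{-1}c}=\lambda_c$ around each $\alpha$-orbit (all of which have length exactly $k$, since $\alpha$ acts freely on $F^*$), yielding $\widehat{\sigma}\in\cob{F}{A}$ with $S\widehat{\sigma}=\widehat{\tau}$. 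Your version is arguably more transparent about \emph{why} the preimage collapses to $\ker{S}+\cob{F}{A}$ (the obstruction $T\widehat{\tau}$ vanishes automatically and $\beta^k=1$ closes the recursion), and it even rederives the content of \eqref{Eq:System3} along the way; the paper's version gets by with slightly less explicit computation by letting the dimension count from Lemma \ref{Lm:D} do the work. Both are complete; the dimension formulas are then extracted identically.
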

\begin{proof}
Let $R=R(\alpha,\beta)$, $S=S(\alpha,\beta)$, $T=T(\alpha,\beta)$ and
$B=\cob{F}{A}$. Assume that $|\beta|$ does not divide $|\alpha|$. Then $S$ is
invertible by Lemma \ref{Lm:Null}, so $\inv{\alpha,\beta} = S^{-1}B = B =
\ker{S} + B$, and we have $\dim\inv{\alpha,\beta} = \dim B = q-1$ thanks to
$p\ne q$.

Now assume that $|\beta|$ divides $|\alpha|$. By Lemmas \ref{Lm:Operators},
\ref{Lm:Null} and \ref{Lm:D}, we have $\im{T} = \ker{S}$, $\dim(\im{S}\cap B) =
(q-1)(1-1/|\alpha|)$, and $\dim\ker{S} = (q-1)^2/|\alpha|$, so
\begin{align*}
    \dim\inv{\alpha,\beta} &= \dim\ker{S} + \dim(\im{S}\cap B)\\
    &= (q-1)^2/|\alpha| + (q-1)(1-1/|\alpha|)\\
    &= (q-1) + (q-1)(q-2)/|\alpha|.
\end{align*}
It remains to show that $\inv{\alpha,\beta} = \ker{S}+B$.

Let $k=|\alpha|$. The coboundaries $\{\widehat{\tau_c};\;c\in F^*\}$ are
linearly independent thanks to $p\ne q$. For $1\le i\le m = (q-1)/k$, let $c_i$
be a representative of the coset $c_i\langle\alpha\rangle$ in $F^*$, and assume
that $\bigcup_{i=1}^m c_i\langle\alpha\rangle = F^*$. By
\eqref{Eq:TauHatAlpha}, the set $\{R^\ell\widehat{\tau_{c_i}};\;0\le\ell\le
k-2$, $1\le i\le m\}$ is linearly independent, and so is its $S$-image
$\{R^\ell\widehat{\tau_{c_i}}-R^{\ell+1}\widehat{\tau_{c_i}};\;0\le\ell\le
k-2$, $1\le i\le m\}\subseteq B$. This shows that $\dim(S(B)\cap B)\ge
(q-1)(1-1/k)$. On the other hand, $\dim(\im{S}\cap B) = (q-1)(1-1/k)$ by Lemma
\ref{Lm:D}. Thus $\im{S}\cap B = S(B)\cap B = S(B)$. But this means that
$\inv{\alpha,\beta} = S^{-1}B$ is equal to $\ker{S}+B$.
\end{proof}

\section{Nilpotent loops of order $2q$, $q$ a prime}\label{Sc:2q}

For $n\ge 1$, let $\nilp{n}$ be the number of nilpotent loops of order $n$ up
to isomorphism. In this section we find a formula for $\nilp{2q}$, where $q$ is
a prime, and describe the asymptotic behavior of $\nilp{2q}$ as $q\to\infty$.

Loops of order $4$ are associative, and, up to isomorphism, there are $2$
nilpotent groups of order $4$, namely $\mathbb Z_4$ and $\mathbb
Z_2\times\mathbb Z_2$.

\begin{theorem}\label{Th:2q}
Let $q$ be an odd prime. For a positive integer $d$, let
\begin{displaymath}
    \mathrm{Pred}(d) = \{d';\;1\le d'<d,\,d/d'\text{ is a prime}\}
\end{displaymath}
be the set of all maximal proper divisors of $d$. Then the number of nilpotent
loops of order $2q$ up to isomorphism is
\begin{equation}\label{Eq:2q}
    \nilp{2q} = \sum_{d\text{ divides }q-1} \frac{1}{d}\left(
    2^{(q-2)d} + \sum_{\emptyset\ne D\subseteq \mathrm{Pred}(d)}
    (-1)^{|D|}\cdot 2^{(q-2)\gcd{D}}\right).
\end{equation}
\end{theorem}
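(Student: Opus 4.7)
The plan is to apply Theorem~\ref{Th:Main} with $A=\mathbb{Z}_2$ and $F=\mathbb{Z}_q$. Separability of every cocycle is guaranteed by Lemma~\ref{Lm:Admissible}(ii) since $|Q|=2q$ is a product of two primes. Before invoking the formula, I would verify that every nilpotent loop $Q$ of order $2q$ arises as such a central extension: $|Z(Q)|$ divides $2q$, but $|Z(Q)|=q$ would force $[Q:Z(Q)]=2$, contradicting Lemma~\ref{Lm:Index2}; hence $Z(Q)$ contains a subgroup $A\cong\mathbb{Z}_2$, and the quotient $Q/A$ is nilpotent of prime order $q$, so $Q/A\cong\mathbb{Z}_q$.

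Next I would analyze the automorphism group. Since $\aut{\mathbb{Z}_2}$ is trivial, $G=\aut{F,A}\cong\aut{\mathbb{Z}_q}$ is cyclic of order $q-1$; its subgroups $H_d$ are indexed by the divisors $d$ of $q-1$, with $H_d$ denoting the subgroup of \emph{index} $d$. Since $G$ is abelian, $N_G(H_d)=G$ and $[N_G(H_d):H_d]=d$. Also $\dim\cob{F}{A}=q-1$ (the $\widehat{\tau_c}$ are linearly independent since $p\ne q$, and $\hom{\mathbb{Z}_q}{\mathbb{Z}_2}=0$), so $|\cob{F}{A}|=2^{q-1}$. The formula of Theorem~\ref{Th:Main} therefore specializes to
\[
\nilp{2q} \;=\; \sum_{d\mid q-1}\frac{|\invs{H_d}|}{2^{q-1}\cdot d}.
\]

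The cardinality of $\inv{H_d}$ follows from Theorem~\ref{Th:pq}: a generator $\alpha$ of $H_d$ has order $(q-1)/d$, Lemma~\ref{Lm:InvH} gives $\inv{H_d}=\inv{\alpha,1}$, and since $|\beta|=1$ trivially divides $|\alpha|$ we obtain $\dim(\inv{\alpha,1}/\cob{F}{A})=(q-1)(q-2)/|\alpha|=(q-2)d$, hence $|\inv{H_d}|/|\cob{F}{A}|=2^{(q-2)d}$. Since the subgroups $K\ge H_d$ are exactly $H_{d'}$ with $d'\mid d$, and $\inv{H_d}$ is the disjoint union of $\invs{H_{d'}}$ over these $d'$, Möbius inversion on the divisor lattice yields
\[
\frac{|\invs{H_d}|}{|\cob{F}{A}|} \;=\; \sum_{d'\mid d}\mu(d/d')\cdot 2^{(q-2)d'}.
\]

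Finally, I would recast this sum in the form appearing in the theorem: nonzero terms correspond to squarefree quotients $d/d'$, parametrized by subsets of the distinct prime divisors of $d$; the bijection $p\mapsto d/p$ translates these into subsets $D\subseteq\mathrm{Pred}(d)$ with $d'=\gcd D$ and $\mu(d/d')=(-1)^{|D|}$. Separating the $D=\emptyset$ term as $2^{(q-2)d}$ and substituting back into the outer sum over $d$ produces the claimed formula. The main obstacle is purely bookkeeping: verifying that the Möbius-inversion sum and the $\mathrm{Pred}(d)$-parametrized sum agree, and that all the index/order conventions line up.
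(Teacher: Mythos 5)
Your proposal is correct and follows essentially the same route as the paper: reduce to $A=\mathbb{Z}_2$, $F=\mathbb{Z}_q$ via Lemma \ref{Lm:Index2}, invoke separability (Lemma \ref{Lm:Admissible}) and Theorem \ref{Th:Main}, compute $\dim(\inv{H_d}/\cob{F}{A})=(q-2)d$ from Theorem \ref{Th:pq}, and then pass from $|\inv{H_d}|$ to $|\invs{H_d}|$ on the divisor lattice of $q-1$. Your M\"obius-inversion step is just an equivalent repackaging of the paper's inclusion--exclusion over $\mathrm{Pred}(d)$ (using $\bigcap_{d'\in D}\inv{H_{d'}}=\inv{H_{\gcd D}}$ from Corollary \ref{Cr:InvHK}), and your translation back to the $\mathrm{Pred}(d)$-indexed sum is accurate.
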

\begin{proof}
By Lemma \ref{Lm:Index2}, the only central extension of $\mathbb Z_q$ by
$\mathbb Z_2$ is the cyclic group $\mathbb Z_{2q}$. Since this group can also
be obtained as a central extension of $\mathbb Z_2$ by $\mathbb Z_q$, we can
set $A=\mathbb Z_2$, $F=\mathbb Z_q$. Then by Lemma \ref{Lm:Admissible}, every
$\theta\in\coc{F}{A}$ is separable, so Theorem \ref{Th:Main} applies.

We have $\aut{F,A} = \aut{F}=\langle\alpha\rangle\cong\mathbb Z_{q-1}$. The
subgroup structure of $\aut{F}$ is therefore transparent: for every divisor $d$
of $q-1$ there is a unique subgroup $H_d=\langle\alpha^d\rangle$ of order
$(q-1)/d$, and if $d$, $d'$ are two divisors of $q-1$ then $\langle H_d\cup
H_{d'}\rangle = H_{\gcd(d,d')}$.

By Theorem \ref{Th:pq},
\begin{displaymath}
    \dim\inv{H_d} = \dim\inv{\alpha^d} = (q-1)+(q-1)(q-2)/((q-1)/d)
    =(q-1)+(q-2)d,
\end{displaymath}
so $\dim(\inv{H_d}/\cob{F}{A}) = (q-2)d$.

Note that $H_d$ is a maximal subgroup of $H_{d'}$ if and only if
$d'\in\mathrm{Pred}(d)$. For $\emptyset\ne D\subseteq\mathrm{Pred}(d)$, we have
$\langle H_{d'};\;d'\in D\rangle = H_{\gcd{D}}$, and so
\begin{displaymath}
    \bigcap_{d'\in D}\inv{H_{d'}} = \inv{H_{\gcd{D}}}
\end{displaymath}
by Corollary \ref{Cr:InvHK}. Then
\begin{displaymath}
    |\invs{H_d}| = |\inv{H_d}| + \sum_{\emptyset\ne D\subseteq\mathrm{Pred}(d)}
    (-1)^{|D|}\cdot |\inv{H_{\gcd{D}}}|
\end{displaymath}
by the principle of inclusion and exclusion.

As $\aut{F}$ is abelian, $[N_{\aut{F}}(H_d):H_d] = [\aut{F}:H_d] = d$. The
formula \eqref{Eq:2q} then follows by Theorem \ref{Th:Main}.
\end{proof}

\begin{example} To illustrate \eqref{Eq:2q}, let us determine $\mathcal N(14)= \mathcal N(2\cdot
7)$. The divisors of $q-1=6$ are $6$, $3$, $2$, $1$. Hence
\begin{equation}\label{Eq:14}
    \mathcal N(14) = (2^{5\cdot 6} - 2^{5\cdot 3} - 2^{5\cdot 2} + 2^{5\cdot 1})/6
    + (2^{5\cdot 3}-2^{5\cdot 1})/3 + (2^{5\cdot 2} - 2^{5\cdot 1})/2
    + 2^{5\cdot 1}/1 = 178,962,784.
\end{equation}
\end{example}

\begin{table}
\caption{The number $\nilp{2q}$ of nilpotent loops of order $2q$, $q$ a prime,
up to isomorphism.}\label{Tb:2q}
\begin{tiny}
\begin{displaymath}
    \begin{array}{l|c}
    2q&\nilp{2q}\\
    \hline
    4&2\\
    6&3\\
    10&1,044\\
    14&178,962,784\\
    22&123,794,003,928,541,545,927,226,368\\
    26&453,709,822,561,251,284,623,981,727,533,724,162,048\\
    34&110,427,941,548,649,020,598,956,093,796,432,407,322,294,493,291,283,427,083,203,517,192,617,984
    \end{array}
\end{displaymath}
\end{tiny}
\end{table}

Table \ref{Tb:2q} lists the number of nilpotent loops of order $2q$ up to
isomorphism for small primes $q$. (It is by no means difficult to evaluate
\eqref{Eq:2q} for larger primes, say up to $q\le 100$, but the decimal
expansion of $\mathcal N(2q)$ becomes too long to display neatly in a table.)

Here is the asymptotic growth of $\mathcal N(2q)$:

\begin{theorem}\label{Th:Asymptotic2q}
Let $q$ be an odd prime. Then the number of nilpotent loops of order $2q$ up to
isomorphism is approximately $2^{(q-2)(q-1)}/(q-1)$. More precisely,
\begin{displaymath}
    \lim_{q\text{ prime, }q\to\infty} \mathcal N(2q)\cdot \frac{q-1}{2^{(q-2)(q-1)}} =
    1.
\end{displaymath}
\end{theorem}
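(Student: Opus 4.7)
The plan is to extract from \eqref{Eq:2q} the dominant summand and dismiss everything else. The candidate main term is the leading piece of the $d = q-1$ block, namely $2^{(q-2)(q-1)}/(q-1)$. I would argue that all other contributions to \eqref{Eq:2q}---both the inclusion-exclusion corrections inside the $d = q-1$ block and the entire contribution from the blocks with $d < q-1$---are exponentially smaller, so their total is negligible in the ratio $\nilp{2q}\cdot(q-1)/2^{(q-2)(q-1)}$.

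The key observation is that every exponent of $2$ appearing in \eqref{Eq:2q} has the form $(q-2)m$ where $m$ divides $q-1$: in the main piece of the $d$-summand $m = d$, while in each inclusion-exclusion correction $m = \gcd D$ is a proper divisor of $d$. Consequently $m = q-1$ occurs only once in the entire formula, as the leading piece of the $d = q-1$ block. Since $q$ is odd, $q-1$ is even, so $2$ divides $q-1$ and the second-largest divisor of $q-1$ is at most $(q-1)/2$. Thus every other exponent appearing in \eqref{Eq:2q} is bounded by $(q-2)(q-1)/2$.

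To close the argument, I would crudely bound the number of terms: there are $\tau(q-1) \le q-1$ values of $d$, and for each $d$ there are $2^{\omega(d)} \le q$ subsets $D \subseteq \mathrm{Pred}(d)$, so the total number of summands is polynomial in $q$. Combining this with the exponent bound via the triangle inequality, the difference between $\nilp{2q}$ and the main term $2^{(q-2)(q-1)}/(q-1)$ is at most $\mathrm{poly}(q) \cdot 2^{(q-2)(q-1)/2}$. Multiplying by $(q-1)/2^{(q-2)(q-1)}$ yields a quantity of the form $\mathrm{poly}(q) \cdot 2^{-(q-2)(q-1)/2}$, which tends to $0$ as $q \to \infty$ and delivers the stated limit. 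The only substantive step is the exponent control, which in turn is driven by the parity fact that $q - 1$ is even; this is the one place where the odd-prime hypothesis on $q$ is genuinely used.
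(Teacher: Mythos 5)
Your proposal is correct and follows essentially the same route as the paper: isolate the unique summand with exponent $(q-2)(q-1)$, use that every other exponent is $(q-2)m$ with $m$ a divisor of $q-1$ not exceeding $(q-1)/2$ (since $q-1$ is even), and bound the polynomially many remaining terms to get an error of size $\mathrm{poly}(q)\cdot 2^{(q-2)(q-1)/2}$ before squeezing. The only cosmetic difference is that the paper first regroups the double sum by exponent and counts the reciprocals attached to each power of $2$, whereas you bound the number of summands of \eqref{Eq:2q} directly; both yield the same $\mathrm{poly}(q)\cdot 2^{-(q-2)(q-1)/2}$ error in the normalized ratio.
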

\begin{proof}
We prove the assertion by a simple estimate. To illustrate the main idea, note
that \eqref{Eq:14} can be rewritten as
\begin{displaymath}
    2^{30}/6 + 2^{15}(1/3-1/6) + 2^{10}(1/2-1/6) + 2^5(1-1/2-1/3+1/6).
\end{displaymath}
Thus, upon rewriting \eqref{Eq:2q} in a similar fashion, there will be no more
than $q-1$ summands, each of the form
\begin{equation}\label{Eq:Summand}
    2^{(q-2)d'}(1/{d_1}\pm 1/{d_2} \pm \cdots \pm 1/{d_m}).
\end{equation}
A reciprocal $1/d$ appears in \eqref{Eq:Summand} if and only if there is a
divisor $d$ of $q-1$ and $D\subseteq\mathrm{Pred}(d)$ such that $\gcd{D} = d'$.
Now, for every divisor $d$ of $q-1$ there is at most one subset
$D\subseteq\mathrm{Pred}(d)$ such that  $\gcd{D}=d'$ (because if
$D=\{e_1,\dots,e_n\}$, $d/e_i=p_i$ is a prime, then $\gcd{D} = d/(p_1\cdots
p_n)$ uniquely determines $D$). Hence the number of reciprocals in
\eqref{Eq:Summand} cannot exceed $q-1$. Finally, the largest proper divisor of
$q-1$ is $(q-1)/2$. Altogether,
\begin{displaymath}
    \frac{2^{(q-2)(q-1)}}{q-1} - (q-1)2^{(q-2)(q-1)/2}(q-1) \le N(2q) \le
    \frac{2^{(q-2)(q-1)}}{q-1} + (q-1)2^{(q-2)(q-1)/2}(q-1),
\end{displaymath}
thus
\begin{displaymath}
    1 - \frac{(q-1)^3}{2^{(q-2)(q-1)/2}} \le N(2q)\cdot
    \frac{(q-1)}{2^{(q-2)(q-1)}} \le 1 + \frac{(q-1)^3}{2^{(q-2)(q-1)/2}},
\end{displaymath}
and the result follows by the Squeeze Theorem.
\end{proof}

\section{Inseparable cocycles}\label{Sc:Inseparable}

Let $A=\mathbb Z_p$, $F$ be as usual. The easiest (but slow) way to deal with
inseparable cocycles $\theta\in\coc{F}{A}$ is to treat separately the subset
\begin{displaymath}
    W(F,A) =
    \{\theta\in\coc{F}{A};\;Z(\ext{F}{A}{\theta})>A\}\subseteq\coc{F}{A}.
\end{displaymath}
We will refer to elements of $W(F,A)$ informally as \emph{large center
cocycles}. Note that the adjective ``large'' is relative to $A$. The subset
$W(F,A)$ can be determined computationally as follows:

Let $Q=\ext{F}{A}{\theta}$. The element $(x,a)$ belongs to $Z(Q)$ if and only
if $\{(x,b);\;b\in A\}\subseteq Z(Q)$, which happens if and only if $x\in Z(F)$
and $\theta$ satisfies
\begin{align*}
    \theta(x,y) &= \theta(y,x),\\
    \theta(x,y) + \theta(xy,z) &= \theta(y,z) + \theta(x,yz),\\
    \theta(y,x) + \theta(yx,z) &= \theta(x,z) + \theta(y,xz),\\
    \theta(y,z) + \theta(yz,x) &= \theta(z,x) + \theta(y,zx)
\end{align*}
for all $y$, $z\in F$. The first condition ensures that $(x,a)$ commutes with
all elements of $Q$, and the last three conditions ensure that $(x,a)$
associates with all elements of $Q$, no matter in which position $(x,a)$
happens to be in the associative law. (Note that the last condition is a
consequence of the first three.)

Hence for every $1\ne x\in Z(F)$ we can solve the above linear equations and
obtain the subspace $W_x(F,A)\le\coc{F}{A}$ such that $\theta\in W_x(F,A)$ if
and only if $(x,A)\subseteq Z(\ext{F}{A}{\theta})$. Then
\begin{displaymath}
    W(F,A)=\bigcup_{1\ne x\in Z(F)} W_x(F,A),
\end{displaymath}
and this subset can be determined by the principle of inclusion and exclusions
on the subspaces $W_x(F,A)$, $1\ne x\in Z(F)$.

Importantly, every cocycle $\theta\in\coc{F}{A}\setminus W(F,A)$ is separable,
since then $\ext{F}{A}{\theta}$ possesses a unique central subloop of the
cardinality $|A|$, namely $A$.

When $A$, $F$ are small, we can complete the isomorphism problem by first
constructing the loops $\ext{F}{A}{\theta}$ for all $\theta\in
W(F,A)/\cob{F}{A}$ and then sorting them up to isomorphism by standard
algorithms of loop theory. Since these algorithms are slow, dealing with large
center cocycles is the main obstacle in pushing the enumeration of nilpotent
loops past order $n=23$.

\section{Enumeration of nilpotent loops of order less than $24$}\label{Sc:Enumeration}

\begin{table}
\caption{The number of nilpotent loops up to
isomorphism.}\label{Tb:Enumeration}
\begin{displaymath}
    \begin{array}{c|c|c|c|c}
    n&A&F&\#Q,\,Z(Q)>A&\#Q\\
    \hline
    \hline
    4&\mathbb Z_2&\mathbb Z_2&2&2\\
    \hline
    6&\mathbb Z_2&\mathbb Z_3&1&3\\
    \hline
    8&\mathbb Z_2&\mathbb Z_4&2&80\\
    8&\mathbb Z_2&\mathbb Z_2\times\mathbb Z_2&2&60\\
    8&\mathbb Z_2&4&3&139\\
    \hline
    9&\mathbb Z_3&\mathbb Z_3&2&10\\
    \hline
    10&\mathbb Z_2&\mathbb Z_5&1&1,044\\
    \hline
    12&\mathbb Z_2&\mathbb Z_6&6&1,049,560\\
    12&\mathbb Z_2&L_{6,2}&4&1,048,576\\
    12&\mathbb Z_2&L_{6,3}&4&525,312\\
    12&\mathbb Z_2&6&11&2,623,485\\
    12&\mathbb Z_3&\mathbb Z_4&1&196\\
    12&\mathbb Z_3&\mathbb Z_2\times\mathbb Z_2&1&76\\
    12&\mathbb Z_3&4&2&272\\
    12&&&&2,623,755\\
    \hline
    14&\mathbb Z_2&\mathbb Z_7&1&178,962,784\\
    \hline
    15&\mathbb Z_3&\mathbb Z_5&1&66,626\\
    15&\mathbb Z_5&\mathbb Z_3&1&5\\
    15&&&&66,630\\
    \hline
    16&\mathbb Z_2&8&9,284&466,409,543,467,341\\
    \hline
    18&\mathbb Z_2&9&34&157,625,987,549,892,128\\
    18&\mathbb Z_3&\mathbb Z_6&10&2,615,147,350\\
    18&\mathbb Z_3&L_{6,2}&14&5,230,176,602\\
    18&\mathbb Z_3&L_{6,3}&10&2,615,147,350\\
    18&\mathbb Z_3&6&34&10,460,471,302\\
    18&&&&157,625,998,010,363,396\\
    \hline
    20&\mathbb Z_2&10&2,798,987&4,836,883,870,081,433,134,082,379\\
    20&\mathbb Z_5&\mathbb Z_4&1&1,985\\
    20&\mathbb Z_5&\mathbb Z_2\times\mathbb Z_2&1&685\\
    20&\mathbb Z_5&4&2&2,670\\
    20&&&&4,836,883,870,081,433,134,085,047\\
    \hline
    21&\mathbb Z_3&\mathbb Z_7&1&17,157,596,742,628\\
    21&\mathbb Z_7&\mathbb Z_3&1&6\\
    21&&&&17,157,596,742,633\\
    \hline
    22&\mathbb Z_2&\mathbb Z_{11}&1&123,794,003,928,541,545,927,226,368
    \end{array}
\end{displaymath}
\end{table}

The results are summarized in Table \ref{Tb:Enumeration}. A typical line of the
table can be read as follows: ``$\#Q$'' is the number of nilpotent loops (up to
isomorphism) of order $n$ that are central extensions of the cyclic group
$A=\mathbb Z_p$ by the nilpotent loop $F$ of order $n/p$. If only the order of
$F$ is given, $F$ is any of the nilpotent loops of order $n/p$. If no
information about $A$ and $F$ is given, any pair $(A,F)$ with $A=\mathbb Z_p$,
$F$ nilpotent of order $n/p$ can be used. Finally, ``$\#Q,\,Z(Q)>A$'' is the
number of nilpotent loops with center larger than $A$. Since this makes sense
only when $A$ is specified, we omit ``$\#Q,\,Z(Q)>A$'' in the other cases.

By Lemma \ref{Lm:Admissible}, we can apply the formula \eqref{Eq:Main} safely
until we reach order $n=12$.

For every prime $p$ there is a unique nilpotent loop of order $p$ up to
isomorphism, namely the cyclic group $\mathbb Z_p$.

The number of nilpotent loops of order $2q$, $q$ a prime, is determined by
Theorem \ref{Th:2q}. Note, however, that the theorem does not produce the
loops. Since we need all nilpotent loops of order $6$ and $10$ explicitly in
order to compute the number of nilpotent loops of order $12$, $18$ and $20$, we
must obtain the nilpotent loops of order $6$, $10$ by other means (a direct
isomorphism check on $\coh{F}{A}$ will do).

In accordance with Theorem \ref{Th:2q}, there are $3$ nilpotent loops of order
$6$. Beside the cyclic group of order $6$, the other two loops are
\begin{displaymath}
    \begin{array}{c|cccccc}
    L_{6,2}&1&2&3&4&5&6\\
    \hline
    1&1& 2& 3& 4& 5& 6 \\  2&2& 1& 4& 3& 6& 5 \\  3&3& 4& 5& 6& 1& 2\\
    4&4& 3& 6& 5& 2& 1 \\  5&5& 6& 2& 1& 3& 4 \\  6&6& 5& 1& 2& 4& 3.
    \end{array},
    \quad\quad
    \begin{array}{c|cccccc}
    L_{6,3}&1&2&3&4&5&6\\
    \hline
    1&1&2&3&4&5&6\\ 2&2&1&4&3&6&5\\ 3&3&4&5&6&1&2\\
    4&4&3&6&5&2&1\\ 5&5&6&1&2&4&3\\ 6&6&5&2&1&3&4
    \end{array}.
\end{displaymath}

\subsection{n=8}

Case $A=\mathbb Z_2$, $F=\mathbb Z_4$. We have $\aut{A}=1$,
$\aut{F}=\langle\alpha\rangle\cong\mathbb Z_2$, and $\dim\hom{F}{A}=1$,
$\dim\cob{F}{A}=2$, $\dim\coc{F}{A}=9$. Computer yields $\dim\inv{\alpha} = 7$.
Hence \eqref{Eq:Main} shows that there are
\begin{displaymath}
    \frac{2^7}{2^2} + \frac{2^9-2^7}{2^2\cdot 2} = 80
\end{displaymath}
central extensions of $\mathbb Z_2$ by $\mathbb Z_4$, up to isomorphism.

Case $A=\mathbb Z_2$, $F=\mathbb Z_2\times\mathbb Z_2$. We have $\aut{A}=1$,
$\aut{F}=\langle \sigma,\rho\rangle\cong S_3$, where $|\sigma|=2$, $|\rho|=3$.
Furthermore, $\dim\hom{F}{A}=2$, $\dim\cob{F}{A}=1$ and $\dim\coc{F}{A}=9$. The
three subspaces $\inv{\sigma}$, $\inv{\sigma\rho}$, $\inv{\sigma\rho^2}$ have
dimension $6$, and any two of them intersect precisely in $\inv{\rho}$ (see
Example \ref{Ex:Collapse}), which has dimension $3$. By \eqref{Eq:Main}, there
are
\begin{displaymath}
    \frac{2^3}{2} + 3\cdot\frac{2^6-2^3}{2\cdot 3} + \frac{2^9 - 3\cdot 2^6 +
    2\cdot 2^3}{2\cdot 6} = 60
\end{displaymath}
central extensions of $\mathbb Z_2$ by $\mathbb Z_2\times\mathbb Z_2$.

In order to pinpoint the number of nilpotent loops of order $8$, we must
determine which loops are obtained both as central extensions of $\mathbb Z_2$
by $\mathbb Z_4$ and of $\mathbb Z_2$ by $\mathbb Z_2\times\mathbb Z_2$. First
of all, $\mathbb Z_2\times\mathbb Z_4$ is such a loop. Assume that $Q$ is
another such loop. Then $|Z(Q)|>2$ and hence $Q$ is an abelian group by Lemma
\ref{Lm:Index2}. Now, $Q\ne\mathbb Z_8$ since every factor $Z_8/\langle
x\rangle$ by an involution is isomorphic to $\mathbb Z_4$. Finally,
$Q\ne\mathbb Z_2\times\mathbb Z_2\times\mathbb Z_2$ since every factor by an
involution is of exponent $2$. We conclude that there are $80+60-1 = 139$
nilpotent loops of order $8$.

\subsection{n=9}

We have $A=\mathbb Z_3$, $F=\mathbb Z_3$, $\dim\hom{F}{A}=1$,
$\dim\cob{F}{A}=1$ and $\dim\coc{F}{A}=4$. Also, $\aut{A}=\langle
\beta\rangle\cong \mathbb Z_2$, $\aut{F} = \langle \alpha\rangle \cong\mathbb
Z_2$.

By computer, $\inv{\beta}=\cob{F}{A}$ has dimension $1$, $\dim \inv{\alpha}=2$,
$\dim\inv{\alpha\beta}=3$, and $\inv{\alpha\beta}\cap \inv{\alpha} =
\inv{\beta}$. Then \eqref{Eq:Main} gives
\begin{displaymath}
    \frac{3}{3} + \frac{3^3-3}{3\cdot 2} + \frac{3^2-3}{3\cdot 2} +
    \frac{3^4 - 3^3 - 3^2 + 3}{3\cdot 4} = 10
\end{displaymath}
nilpotent loops of order $9$.

\subsection{n=12}

For the first time we have to worry about separability, and hence we have to
calculate the subsets $W(F,A)$.

Case $A=\mathbb Z_2$, $F=\mathbb Z_6$. Let $\aut{F}=\langle
\alpha\rangle\cong\mathbb Z_2$. The subset $W(F,A)$ is in fact a subspace: Let
$x\in F$ be the unique involution and $y\in F$ an element of order $3$. If
$\theta\in W_y(F,A)$ then $Z(\ext{F}{A}{\theta})=\ext{F}{A}{\theta}$ by Lemma
\ref{Lm:Index2}. Thus $W_y(F,A)\subseteq W_x(F,A) = W(F,A)$.

Computer calculation yields $\dim W(F,A) = 7$, $\dim\cob{F}{A}=4$,
$\dim\inv{\alpha}= 15$, and $\dim (W(F,A)\cap\inv{\alpha}) = 6$. Thus there are
\begin{displaymath}
    \frac{2^{25} - 2^7 - (2^{15}-2^6)}{2^4\cdot 2} + \frac{2^{15}-2^6}{2^4} =
    1,049,594
\end{displaymath}
loops $Q$ with $|Q|=12$ and $Q/Z(Q)=\mathbb Z_6$. Among the $2^7/2^4 = 8$ loops
constructed from the large center cocycles, $6$ are nonisomorphic.

Case $A=\mathbb Z_2$, $F=L_{6,2}$. By computer, $\aut{F}=1$,
$\dim\cob{F}{A}=5$, $\dim W(F,A) = 7$. Thus there are
\begin{displaymath}
    \frac{2^{25}-2^7}{2^5} = 1,048,572
\end{displaymath}
loops $Q$ with $|Q|=12$ and $Q/Z(Q)=F$. The $2^7/2^5=4$ loops corresponding to
cocycles in $W(F,A)$ are pairwise nonisomorphic.

Case $A=\mathbb Z_2$, $F=L_{6,3}$. Then computer gives
$\aut{F}=\langle\alpha\rangle\cong\mathbb Z_2$, $\dim\cob{F}{A}=5$,
$\dim{W(F,A)}=7$, $\dim\inv{\alpha}=16$, and $W(F,A)\le\inv{\alpha}$. Thus
there are
\begin{displaymath}
    \frac{2^{25}-2^{16}}{2^5\cdot 2} + \frac{2^{16}-2^7}{2^5} = 525,308
\end{displaymath}
nilpotent loops $Q$ with $|Q|=12$ and $Q/Z(Q)=F$. The $2^7/2^5 = 4$ loops
corresponding to large center cocycles are pairwise nonisomorphic.

Among the $6+4+4$ loops with $|Z(Q)|>2$ found so far, $11$ are nonisomorphic.

Case $A=\mathbb Z_3$, $|F|=4$. If $Z(Q)>A$ then $[Q:Z(Q)]\le 2$, so all
cocycles in $\coc{F}{A}$ are separable by Lemma \ref{Lm:Admissible}. The
details are in Table \ref{Tb:Enumeration}.

If a nilpotent loop of order $12$ is a central extension of both $\mathbb Z_2$
and of $\mathbb Z_3$, it is an abelian group by Lemma \ref{Lm:Index2}, and
hence it is isomorphic to $\mathbb Z_2\times\mathbb Z_2\times \mathbb Z_3$ or
to $\mathbb Z_4\times\mathbb Z_3$. We have counted these two loops twice and
must take this into account.

\subsection{n=15}

Either $A=\mathbb Z_3$, $F=\mathbb Z_5$ or $A=\mathbb Z_5$, $F=\mathbb Z_3$. In
both cases, all cocycles are separable by Lemma \ref{Lm:Admissible}. Most
subspaces $\inv{H}$ can be determined by Theorem \ref{Th:pq}. The two cases
overlap only in $\mathbb Z_3\times\mathbb Z_5$.

\subsection{n=16}

This is a more difficult case due to the $139$ nilpotent loops $F_1$, $\dots$,
$F_{139}$ of order $8$.

Cases $A=\mathbb Z_2$, $F=F_i$. We calculate the subsets $W_i = W(F_i,A)$, and
treat separable cocycles outside $W_i$ as usual. (In one of the cases, the
automorphism group $\aut{F,A}=\aut{F}$ is the simple group of order $168$, the
largest automorphism group we had to deal with in the entire search.) We filter
the large center loops up to isomorphism.

We now need to filter the union of the $139$ sets of large center loops up to
isomorphism. This can be done efficiently as follows: Let
$Q=\ext{F}{A}{\theta}$ where $\theta\in W_i$. For every central involution $x$
of $Q$, calculate $Q/\langle x\rangle$ and determine its isomorphism type. If
$Q/\langle x\rangle$ is isomorphic to some $F_j$ with $j<i$, we have already
seen $Q$ and can discard it.

\subsection{n=18}

See Table \ref{Tb:Enumeration}.

\subsection{n=20}

This is the computationally most difficult case, due to the $1,044$ nilpotent
loops of order $10$. See \S\ref{Sc:Remarks} for more. The efficient filtering
of large center loops is crucial here. On the other hand, $1,008$ out of the
$1,044$ nilpotent loops of order $10$ have trivial automorphism groups.

\subsection{n=21}

This case is analogous to $n=15$.

\section{Related ideas and concluding remarks}\label{Sc:Remarks}

For an introduction to loop theory see Bruck \cite{Bruck} or Pflugfelder
\cite{Pflugfelder}.

The study of (central) extensions of groups by means of cocycles goes back to
Schreier \cite{Schreier}. The abstract cohomology theory for groups was
initiated by Eilenberg and MacLane in \cite{EM1}--\cite{EM3}, and it has grown
into a vast subject.

Eilenberg and MacLane were also the first to investigate cohomology of loops.
In \cite{EM4}, they imposed conditions on loop cocycles that mimic those of
group cocycles, and calculated some cohomology groups. A more natural theory
(by many measures) of loop cohomology has been developed in \cite{JL} by
Johnson and Leedham-Green. As in this paper, their third cohomology group
vanishes, since they impose no conditions on the (normalized) loop
$2$-cocycles.

We are not aware of any work on the classification of nilpotent loops \emph{per
se}. In the recent paper \cite{MMM}, McKay, Mynert and Myrvold enumerated all
loops of order $n\le 10$ up to isomorphism. We believe that all results in \S
\ref{Sc:Main}--\S \ref{Sc:Enumeration} are new.

This being said, the central notion of separable cocycles must have surely been
noticed before, but since it is of limited utility in group theory (where much
stronger structural results are available to attack the isomorphism problem of
central extensions), it has not been investigated in the more general setting
of loops. The experienced reader will recognize the mappings $S$ and $T$ of \S
\ref{Sc:pq} as the consecutive differentials in a free resolution of a cyclic
group, cf. \cite[Ch.\ 2]{Evens}.

The computational tools developed here are applicable to finitely based
varieties of loops, and can therefore be used to classify nilpotent loops of
small orders in such varieties. One merely has to start with the appropriate
space of cocycles (determined by a system of linear equations, just as in the
group case). The first author intends to undertake this classification for
loops of Bol-Moufang type, cf. \cite{PV}. The classification of all Moufang
loops of order $n\le 64$ and $n=81$ can be found in \cite{NV}. The
classification of Bol loops has been started in \cite{Moorhouse}. The LOOPS
\cite{LOOPS} package contains libraries of small loops in certain varieties,
including Bol and Moufang loops.

All calculations in this paper have been carried out in the GAP \cite{GAP}
package LOOPS. We wrote two mostly independent codes, and the calculations have
been done at least twice. The enumeration of nilpotent loops of order $20$ took
more than $90$ percent of the total calculation time, about $2$ days on a
single-processor Unix machine. Both codes and the multiplication tables of all
nilpotent loops of order $n\le 10$ can be downloaded at the second author's web
site \texttt{http://www.math.du.edu/\~{}petr}.

\bibliographystyle{plain}

\begin{thebibliography}{99}

\bibitem{Bruck} R.~H.~Bruck, A survey of binary systems, Third printing,
    corrected, \emph{Ergebnisse der Mathematik und ihrer Grenzgebiete}
    \textbf{20}, Springer-Verlag, 1971.

\bibitem{EM1} S.~Eilenberg and S.~MacLane, \emph{Group extensions
    and homology}, Ann. of Math., Second Series, \textbf{43}, no.
\textbf{4} (Oct. 1942), 757--831.

\bibitem{EM2} S.~Eilenberg and S.~MacLane, \emph{Cohomology Theory
    in Abstract Groups I}, Ann. of Math., Second Series, \textbf{48}, no. \textbf{1}
(Jan. 1947), 51--78.

\bibitem{EM3} S.~Eilenberg and S.~MacLane, \emph{Cohomology Theory
    in Abstract Groups II}, Ann. of Math., Second Series, \textbf{48}, no. \textbf{1} (Apr. 1947), 326--341.

\bibitem{EM4} S.~Eilenberg and S.~MacLane \emph{Algebraic
    cohomology groups and loops}, Duke Math. J. \textbf{14} (1947), 435--463.

\bibitem{Evens} L.~Evens, The cohomology of groups, \emph{Oxford Mathematical
    Monographs}, The Clarendon Press, 1991.

\bibitem{GAP} The \textsf{GAP} Group, \textsf{GAP} ---
    Groups, Algorithms, and Programming, Version 4.4; Aachen, St Andrews
    (2006). (Visit http://www-gap.dcs.st-and.ac.uk/\~{}gap).

\bibitem{HR} C.~J.~Hillar and D.~L.~Rhea, \emph{Automorphisms of finite
    abelian groups}, Amer. Math. Monthly \textbf{114} (2007), no.
    \textbf{10}, 917--923.

\bibitem{JL} K.~W.~Johnson and C.~R.~Leedham-Green, \emph{Loop
    cohomology}, Czechoslovak Math. J. \textbf{40}(\textbf{115}) (1990), no. \textbf{2}, 182--194.

\bibitem{MMM} B.~McKay, A.~Meynert, W.~Myrvold, \emph{Small
    Latin squares, quasigroups, and loops}, J. Combin. Des. \textbf{15} (2007), no. \textbf{2}, 98--119.

\bibitem{Moorhouse} G.~E.~Moorhouse, \emph{Bol loops of small order}, technical
    report, \texttt{http://www.uwyo.edu/moorhouse/pub/bol/}.

\bibitem{NV} G.~P.~Nagy and P.~Vojt\v{e}chovsk\'y, \emph{The Moufang loops of
    order $64$ and $81$}, J. Symbolic Comput. \textbf{42} (2007), no. \textbf{9},
    871--883.

\bibitem{LOOPS} G.~P.~Nagy and
    P.~Vojt\v{e}chovsk\'y, \textsf{LOOPS}: \emph{Computing with
    quasigroups and loops in} \textsf{GAP}, download at
    \texttt{http://www.math.du.edu/loops}.

\bibitem{Pflugfelder} H.~O.~Pflugfelder, Quasigroups and Loops: Introduction,
    \emph{Sigma series in pure mathematics} {\bf 7}, Heldermann Verlag Berlin,
    1990.

\bibitem{PV} J.~D.~Phillips and P.~Vojt\v{e}chovsk\'y, \emph{The varieties of
    loops of Bol-Moufang type}, Algebra Universalis \textbf{54} (2005), no.
    \textbf{3}, 259--271.

\bibitem{Schreier} O.~Schreier, \emph{\"{U}ber die Erweiterung von Gruppen
    I.}, Monatsh. Math. Phys. \textbf{34} (1926), no. \textbf{1}, 165--180.


\end{thebibliography}

\end{document}